\documentclass[11pt]{article}
\usepackage{fullpage}
\usepackage[psamsfonts]{eucal}
\usepackage{amsmath}
\usepackage{amsfonts}
\usepackage{dsfont}
\usepackage{caption}

\usepackage{amssymb}
\usepackage{amstext}
\usepackage{amscd}
\usepackage{aliascnt}
\usepackage{amsthm}
\usepackage{makeidx}
\usepackage{graphicx}
\usepackage[colorlinks=true,linkcolor=blue]{hyperref}
\usepackage{supertabular}
\usepackage{enumerate}

\usepackage{titling}
\usepackage{wasysym}
\usepackage{color}
\usepackage{xcolor}
\usepackage{comment}
\usepackage{cleveref}
\usepackage{listings}
\usepackage{relsize}

\usepackage{mathptmx}
\usepackage{microtype}

\newcommand{\Var}{\textnormal{Var}}

\usepackage{cancel}

\numberwithin{equation}{section}
 
\newcommand{\be}{\begin{equation}}
\newcommand{\ee}{\end{equation}}

\def\titlefont{\fontsize{15}{17}\bfseries\boldmath\selectfont\centering{}}
\def\authorfont{\fontsize{13}{15}}

\let\affiliationfont\rhfont

\def\address#1{\par
    {\centering{\affiliationfont#1\par}}\par\vspace*{11pt}
}

\def\body{
\setcounter{footnote}{0}
\def\thefootnote{\alph{footnote}}
\def\@makefnmark{{$^{\rm \@thefnmark}$}}
}

\def\title#1{
    \thispagestyle{plain}
    \vspace*{-14pt}
    \vskip 79pt
    {\centering{\titlefont #1\par}}
    \vskip 1em
}

\newtheorem{theorem}{Theorem}[section]

\newaliascnt{proposition}{theorem}
\newtheorem{proposition}[proposition]{Proposition}
\aliascntresetthe{proposition}

\newtheorem{thm}[theorem]{Theorem}

\newaliascnt{lemma}{theorem}
\newtheorem{lemma}[lemma]{Lemma}
\aliascntresetthe{lemma}

\newaliascnt{claim}{theorem}

\aliascntresetthe{claim}

\newaliascnt{corollary}{theorem}
\newtheorem{corollary}[corollary]{Corollary}
\aliascntresetthe{corollary}

\newaliascnt{cor}{theorem}

\aliascntresetthe{cor}
\crefname{cor}{corollary}{corollaries}
\Crefname{cor}{Corollary}{Corollaries}

\newaliascnt{conjecture}{theorem}

\aliascntresetthe{conjecture}

\newaliascnt{observation}{theorem}

\aliascntresetthe{observation}

\newaliascnt{prop}{theorem}

\aliascntresetthe{prop}
\crefname{prop}{proposition}{propositions}
\Crefname{prop}{Proposition}{Propositions}

\newtheorem*{question*}{Question}

\newaliascnt{fact}{theorem}

\aliascntresetthe{fact}

\newaliascnt{definition}{theorem}
\newtheorem{definition}[definition]{Definition}
\aliascntresetthe{definition}

\newaliascnt{dfn}{theorem}

\aliascntresetthe{dfn}
\crefname{dfn}{definition}{definitions}
\Crefname{dfn}{Definition}{Definitions}

\newaliascnt{problem}{theorem}

\aliascntresetthe{problem}

\newaliascnt{question}{theorem}

\aliascntresetthe{question}

\newtheorem*{definition*}{Definition}

\newaliascnt{example}{theorem}

\aliascntresetthe{example}

\newaliascnt{setup}{theorem}

\aliascntresetthe{setup}

\theoremstyle{remark}

\newaliascnt{remark}{theorem}
\newtheorem{remark}[remark]{Remark}
\aliascntresetthe{remark}

\newcommand{\re}{\mathrm{e}}
\newcommand{\bP}{\mathbb{P}}

\newcommand{\bE}{\mathbb{E}}
\usepackage[noadjust]{cite}

\DeclareMathOperator{\OO}{O}
\DeclareMathOperator{\oo}{o}

\newcommand{\diag}{\mathrm{diag}}

\newcommand{\bR}{\mathbb{R}}

\newcommand{\RR}{\mathbb{R}}

\newcommand{\Pois}{\operatorname{Pois}}

\newcommand{\bC}{\mathbb{C}}

\newcommand{\dist}{\mathrm{dist}}

\newcommand{\SBM}{\mathrm{SBM}}
\newcommand{\deq}{\mathrel{\mathop:}=} 

\def\@settitle{\begin{center}
    \bfseries
 \normalfont\LARGE\@title
  \end{center}
}
\def\@setauthors{\begin{center}
 \normalsize\@author
  \end{center}
}
\def\author#1{\par
    {\centering{\authorfont#1}\par\vspace*{0.05in}}
}

\newenvironment{proof*}[1][\proofname]{
  
  \begin{proof}[#1]}{\end{proof}}

\begin{document}
\title{The Bethe--Hessian down to the Percolation Threshold }
\noindent \begin{minipage}[c]{0.6\textwidth}
 \author{Dingding Dong}
\address{California Institute of Technology\\
   ddong124@caltech.edu}
 \end{minipage}
  \begin{minipage}[c]{0.2\textwidth}
 \author{Theo McKenzie}
\address{Yale University\\
   theo.mckenzie@yale.edu }
 \end{minipage}
 
\abstract{
The Bethe--Hessian is a symmetric matrix for which the negative spectrum has been observed to encode the informative structure of sparse stochastic block models. We prove that, in the stochastic block model where all vertices have expected degree $d>1$, the number of negative eigenvalues of the Bethe--Hessian is exactly the number predicted by the eigenvalues of the planted model lying outside the bulk spectrum. The condition $d>1$ is optimal, and matches a regime in which existing spectral approaches based on larger non-Hermitian matrices apply.  Our result extends a theorem of Stephan and Zhu, who established the same conclusion under the assumption $d\geq 2$.

Our improvement relies on two main ideas. First, we construct test vectors on the $2$-core, where degree fluctuations are substantially smaller, and then extend them to the entire graph while controlling the quadratic form. Second, we construct the test vectors using an isotropic basis of the underlying Markov random field, with coefficients adapted to each relevant planted eigenvalue. This allows us to control the fluctuations of the test vectors throughout the sparse regime.
 
}

\section{Introduction}

    A standard probabilistic model for community detection is the {stochastic block model} (SBM), in which vertices are assigned latent community labels and edges are generated independently with probabilities determined by those labels \cite{holland1983}. The study of the SBM has led to several influential algorithms for graph partitioning \cite{abbe2018community}. A major breakthrough in the sparse regime was the construction of efficient algorithms achieving weak recovery down to the {Kesten-Stigum threshold} \cite{massoulie2014community, mossel2018proof}, which is known to be the information-theoretic threshold for $2$ communities \cite{mossel2015reconstruction}. Around the same time, researchers observed the phenomenon of {spectral redemption}, that partitioning can be performed by examining the spectrum of the {non-backtracking operator} \cite{krzakala2013spectral}. Whereas the edge of the spectrum of sparse adjacency matrices in the SBM will localize around the highest degree vertices \cite{krivelevich2003largest}, the nonbacktracking operator remains delocalized in the sparse regime. This was proven rigorously in \cite{bordenave2018nonbacktracking}.
    
   The nonbacktracking matrix is non-Hermitian and therefore lacks the usual variational characterization and orthogonal spectral decomposition; its spectral subspaces are also more delicate to control under perturbations. Moreover, because its indices correspond to directed edges rather than vertices, its dimension can be substantially larger. These limitations motivated further study of the \emph{Bethe--Hessian}, which is the focus of this paper.

\begin{definition}[Bethe--Hessian]\label{def:Bethe--Hessian}Let $G$ be an $n$-vertex graph and $t\in\bC$. 
    The \textit{Bethe--Hessian} matrix is defined to be
\begin{align}
    H(t, G)\deq t^2I_n - tA_{ G}+ (D_{ G}-I_{n}),
\end{align}
where $A_ G$ is the adjacency matrix and $D_ G$ is the diagonal matrix of degrees in $G$. 

\end{definition}

The Bethe--Hessian has its origins in the Bethe approximation, which describes Gibbs measures in terms of local vertex and edge marginals \cite{bethe1935statistical}. The graphical-model formulation expresses this approximation through the Bethe free energy, where stationary points correspond to fixed points of belief propagation \cite{yedidia2005constructing}. For an Ising model on a graph, the Hessian of the Bethe free energy at its paramagnetic stationary point equals, up to a positive scalar factor, the Bethe--Hessian $H(t)$ \cite{saade2014spectral}. In this interpretation, negative eigenvalues of $H(t)$ give unstable directions of the paramagnetic state, providing the physical motivation for using its negative eigenspace to identify community structure. A parallel motivation comes from number theory, through the connection between the nonbacktracking operator, the Ihara zeta function, and the Ihara--Bass determinant formula \cite{hashimoto1989zeta,bass1992ihara}.

These applications led to the use of the Bethe--Hessian for graph partitioning. Numerical experiments suggest that the number of its negative eigenvalues coincides with the number of communities that can be identified above the Kesten--Stigum threshold \cite{saade2014spectral}. Because the Bethe--Hessian is a smaller Hermitian matrix, it offers a faster and more numerically stable approach to partitioning than methods based directly on the nonbacktracking matrix. Closely related work establishes the existence of informative Bethe--Hessian outliers above the Kesten--Stigum threshold in the context of robust community recovery \cite{mohanty2024robust}. Alternative choices of the Bethe--Hessian parameter, designed in particular to accommodate degree heterogeneity, have also been studied \cite{dallamico2021unified}.

Stephan and Zhu proved the prediction from \cite{saade2014spectral} rigorously for $d\geq 2$
\cite{Stephan2025community}. Their argument uses test vectors obtained
from the informative eigenvectors of the nonbacktracking operator.
When $1<d<2$ and the signal is close to the Kesten--Stigum threshold,
the degree-weighted contribution to the quadratic form for these
vectors can become too large, and the argument no longer produces a
negative direction. This can be viewed as a too high sensitivity of
the test vectors to high-degree vertices.

Near the threshold, the negative contribution associated with the planted signal becomes small. To identify it, it is necessary to control the fluctuations of the sparse local geometry sufficiently sharply to retain the sign of the quadratic form. In this paper, we completely solve the eigenvalue question by showing that the number of negative outliers of the Bethe--Hessian is exactly equal to the number of informative directions for all $d>1$, which is the optimal region (see \Cref{thm:main}).

Our argument is based on two main ideas. The first is that, for the purpose of community detection, it is enough to analyze the $2$-core of the graph. When every community has the same expected degree, the unlabeled geometry of the trees attached to the $2$-core contains no additional information about the planted labels \cite{zdeborova2016fast}. This observation has a precise counterpart for the Bethe--Hessian operator. Given a vector on the $2$-core, there is a canonical way to extend it to the full graph such that the associated quadratic form is unchanged. Moreover, for the relevant parameter $t\geq \sqrt{d}$, the squared norm of the extended vector differs from that of the original vector by an asymptotically deterministic multiplicative factor. It is therefore sufficient to determine the negative eigenspace of the Bethe--Hessian restricted to the $2$-core.

This reduction is particularly useful near the percolation threshold. As $d$ approaches $1$, the local neighborhoods of the full graph contain increasingly long and irregular pendant trees. These fluctuations make it difficult to formulate a local construction whose quadratic form remains uniformly negative near the Kesten--Stigum threshold. Passing to the $2$-core removes precisely this unstable part of the geometry. Its local neighborhoods retain a substantially more regular structure, making it possible to identify the negative directions through a local rule.

The second idea concerns the local structure of the planted labels. Following \cite{bordenave2018nonbacktracking}, one may view the labels in a locally tree-like neighborhood as a tree-indexed Markov random field, conditional on the underlying geometry. We choose a canonical isotropic decomposition of this random field and construct our test vectors using weights adapted to that decomposition. The key advantage is that, when the root is moved across an edge, the decomposition transforms in an essentially identical manner. This removes much of the dependence on the detailed geometry of the neighborhood.

As a consequence, the Bethe--Hessian quadratic form can be expressed in terms of a small collection of scalar random variables. The terms that reinforce the planted correlation can then be separated explicitly from those that oppose it. The resulting comparison reduces to a scalar inequality whose sign changes at the Kesten--Stigum threshold: when the signal is above the threshold, the correlation-enhancing terms dominate and produce a negative direction. This provides a negative subspace of the required dimension on the $2$-core, which can then be extended canonically to the full graph.

\quad

\noindent \textbf{Acknowledgements.}
T.M. was supported by a Stanford Science Fellowship. The main idea of the test vectors and analysis were developed by the authors, building upon the prior
work \cite{dong2026limit} as well as methods from \cite{bordenave2018nonbacktracking}. OpenAI's ChatGPT 5.6 Sol was used to check for errors, simplify explication, and give feedback on the manuscript.

\section{Main result}

\subsection{The Stochastic Block Model}\label{subsec:sbm}

Given a symmetric nonnegative matrix $P \in \mathbb{R}^{r \times r}$ and a distribution $\pi$ over $[r]$, the stochastic block model $\mathrm{SBM}_n(P,\pi)$ generates a random graph $G$ on $n$ vertices as follows:
\begin{enumerate}
    \item Fix a label map $\sigma:[n]\to[r]$ such that $|\sigma^{-1}(i)|=\pi_i n$ for every $i\in[r]$.
    \item For every pair of vertices $u,v\in[n]$, include the edge $uv$ independently with probability $\frac{1}{n}P_{\sigma(u),\sigma(v)}$.
\end{enumerate}

The goal is to show when it is possible to detect the original communities. Specifically, we will focus in the \emph{sparse} regime, where the expected degree of each vertex is constant. 

Throughout, we assume that all vertices in $G$ have the same expected degree $d>1$, i.e., 
\begin{align}\label{eq:const-deg}
d\deq \sum_{j=1}^r P_{ij}\pi_j>1
\qquad
\text{is the same for all } i\in[r].
\end{align}
The restriction $d>1$ is necessary, as the weak-recovery threshold is at least $1$ \cite{mossel2015reconstruction}.

\subsection{Informative eigenvalues} \label{subsec:inform-ev}
Given $\mathrm{SBM}_n(P,\pi)$, define the \textit{signal matrix} $Q\deq P\Pi$, where $\Pi\deq \diag(\pi_1,\dots,\pi_r)$. Note that $Q_{ij}=P_{ij}\pi_j$ represents the expected number of neighbors in community $j$ of a vertex in community $i$. 

As $Q$ is similar to $\Pi^{1/2}P\Pi^{1/2}$, it has real eigenvalues. We order the eigenvalues of $Q$ by absolute value as
\begin{align}\label{eq:evs}
    d=\mu_1\geq|\mu_2|\geq\cdots\geq|\mu_r|.
\end{align}

The condition $\mu_i^2>d$ is the Kesten--Stigum threshold associated
with the eigenvalue $\mu_i$. It is the natural threshold for
linear reconstruction on the local Galton--Watson tree. Indeed, at
depth $\ell$, the signal in this direction grows as $|\mu_i|^\ell$,
while the fluctuations are of order $d^{\ell/2}$. Thus the signal
dominates the noise precisely when
\[
    |\mu_i|>\sqrt d.
\]
We therefore call an eigenvalue $\mu_i$ \emph{informative} if
$\mu_i^2>d$. Denote the positive and negative informative eigenvalues of $Q$ by
\begin{align}\label{eq:pmevs}
    \mu_1^+\geq\cdots\geq\mu_{r_+}^+>\sqrt d,
    \qquad
    \mu_1^-\leq\cdots\leq\mu_{r_-}^-<-\sqrt d.
\end{align}
Thus, $r_+$ and $r_-$ are the numbers of positive and negative informative eigenvalues, respectively.

\subsection{Our result}\label{subsec:main-result}

Let $ G\sim\SBM_n(P,\pi)$, where the fixed parameters satisfy
\eqref{eq:const-deg}, and let $r_+$ and $r_-$ be defined by
\eqref{eq:pmevs}. For every $t\in\RR$,  let $N(t)$ denote the number of negative eigenvalues of $H(t, G)$; for  $\epsilon>0$,  let $N_\epsilon(t)$ denote the number of  eigenvalues of $H(t, G)$ below $-\epsilon$.

Saade, Krzakala, and Zdeborov\'a conjectured, on the basis of numerical
experiments, that the negative spectrum of the Bethe--Hessian counts
the informative directions of the SBM \cite{saade2014spectral}. Their
original formulation counts all negative eigenvalues at
$t=\pm\sqrt d$.\footnote{Our numerical experiments suggest that this
formulation requires a minor qualification: at the unshifted
parameters $t=\pm\sqrt d$, the edge of the bulk appears to contribute
an additional negative eigenvalue with probability bounded away from
zero \cite{mckenzie2026BetheHessianNumerics}. This is consistent with the behavior of other random matrix
models, in which the extremal bulk eigenvalue has a nontrivial
probability of crossing the edge of the limiting spectrum. We
therefore count only eigenvalues separated from zero.} Accordingly,
the prediction relevant to our result is
\begin{align}\label{eq:maingoal}
    N_\epsilon(\sqrt d)=r_+,
    \qquad
    N_\epsilon(-\sqrt d)=r_-.
\end{align}

Earlier results treated growing expected degree. Le and Levina
\cite{le2022estimating} proved consistency of the Bethe--Hessian
estimator in the assortative regime (i.e., the first half of \eqref{eq:maingoal}) when $d=\omega(\log n)$.
Hwang et al.\ \cite{hwang2024estimation} extended consistency to
$d=\omega_n(1)$ for an appropriate choice of the Bethe--Hessian
parameter. For constant expected degree, Stephan and Zhu \cite{Stephan2025community} subsequently
proved \eqref{eq:maingoal} for all fixed
$d\geq2$.
In this paper, we complete this program throughout the supercritical regime $d>1$. 

\begin{thm}\label{thm:main}
Assume that the SBM parameters are fixed and satisfy
\eqref{eq:const-deg}, and define $r_+$ and $r_-$ by
\eqref{eq:pmevs}. Let $ G\sim\SBM_n(P,\pi)$. There exists
$\epsilon_0>0$ such that, for every fixed
$0<\epsilon<\epsilon_0$, with probability $1-\oo_n(1)$,
\be\label{eq:fullres}
    N_\epsilon(\sqrt d)
    =
    N(\sqrt d+\epsilon)
    =
    r_+,
    \qquad
    N_\epsilon(-\sqrt d)
    =
    N(-\sqrt d-\epsilon)
    =
    r_-.
\ee
Moreover, for every $0<c<1/2$, the same equalities hold with
$\epsilon=(\log n)^{-c}$.
\end{thm}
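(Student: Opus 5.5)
The count splits into an upper bound and a lower bound. For the upper bound, $N(\sqrt d+\epsilon)\le r_+$ and $N_\epsilon(\sqrt d)\le r_+$ (and the analogues at $-\sqrt d$), I will use the Ihara--Bass identity
\[
\det H(t,G)=\frac{\det(I-tB)}{(1-t^2)^{|E|-n}},
\]
where $B$ is the nonbacktracking matrix. By the spectral theorem of \cite{bordenave2018nonbacktracking} (see also \cite{Stephan2025community}, valid for all $d>1$), with high probability $B$ has exactly $r$ eigenvalues outside the disk of radius $\sqrt d+o(1)$, and these are close to $\mu_1,\dots,\mu_r$. Consider the continuous path $t\mapsto H(t,G)$ for $t\ge \sqrt d+\epsilon'$ with $t$ increasing to large values. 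For large $t$, $H(t)$ is positive definite. An eigenvalue of $H(t)$ can cross zero only at $t$ with $1/t$ an eigenvalue of $B$, which means $t$ is near $1/\mu_i$. Only the informative positive $\mu_i$ give real reciprocals $t\in(\sqrt d,\infty)$. One caveat is that $1/\mu_i<1$ while $t>\sqrt d>1$, so I have the roles inverted. The correct Ihara--Bass zero condition for $H(t)$ is that $t$ is an eigenvalue of $B$, not $1/t$. With that correction, the crossings in $(\sqrt d+\epsilon,\infty)$ occur only near the $r_+$ informative positive eigenvalues of $B$, each of which is simple for generic parameters (otherwise count with multiplicity). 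So at most $r_+$ eigenvalues become negative, giving $N(\sqrt d+\epsilon)\le r_+$. To pass to $N_\epsilon(\sqrt d)$, note that $\partial_t H=2t-A$ and $\|H(\sqrt d+\epsilon)-H(\sqrt d)\|$ is not small because $\|A\|$ is unbounded. I will instead use the quadratic-form comparison
\[
\langle x,H(\sqrt d)x\rangle\ge\langle x,H(\sqrt d+\epsilon)x\rangle-C\epsilon\langle x,(D+I)x\rangle,
\]
restricted to the $2$-core, where degrees are controlled. Then I extend using the canonical tree extension described in the introduction, which preserves the quadratic form and rescales norms deterministically.

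For the lower bound, I need an $r_+$-dimensional subspace on which the quadratic form of $H(\sqrt d+\epsilon)$ is $\le -c\|x\|^2$. I will construct it on the $2$-core $K$ of $G$ and then extend. First I reduce to $K$. For a vector $x$ on $K$, define the extension on each pendant tree by $x_v=x_{\mathrm{parent}(v)}/t$. A direct computation shows that $\langle \tilde x,H(t,G)\tilde x\rangle=\langle x,H_K(t)x\rangle$ for a modified core operator $H_K$, which differs from $H(t,K)$ only on the diagonal through the pendant-tree counts. The extension also gives $\|\tilde x\|^2=(1+o(1))\,c_t\|x\|^2$ by a law of large numbers, since the subcritical pendant trees have summable weights $t^{-2k}$ when $t\ge\sqrt d$ (here I need the offspring mean on pendant trees to be below $d$, which holds).

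On $K$, I take test vectors $x^{(i)}_v=\sum_{\ell\le L}w_\ell\, \langle \phi_i,\ \text{label-field statistic at depth }\ell\text{ around }v\rangle$. Here the statistic uses the isotropic decomposition of the tree-indexed Markov field in the left eigenvector $\phi_i$ of $Q$, and the weights are $w_\ell=(\mu_i/t)^{\ell}$ truncated at depth $L\asymp\log n/\log d$ small. Using local weak convergence of $K$ to the core-conditioned multitype Galton--Watson tree, I will compute $\langle x,Hx\rangle/n$ and $\|x\|^2/n$, together with cross terms between different $i$, up to $o(1)$ by second-moment concentration. The quadratic form then reduces to a scalar expression in $\mu_i,d,t,L$ that is negative of order $(\mu_i^2/d-1)$ when $\mu_i^2>d$, and the cross terms vanish by orthogonality of the $\phi_i$. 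This yields $N(\sqrt d+\epsilon)\ge r_+$ and similarly $N_\epsilon(\sqrt d)\ge r_+$. The $t<0$ case follows by the sign flip $x_v\mapsto(-1)^{\text{depth}}$, which is absorbed into $\mu_i<0$.

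The main obstacle is the scalar computation in the second paragraph. I must show that the negative contribution $\sim(\mu_i^2-d)$ survives the degree fluctuations in the diagonal term $D-I$ uniformly as $d\downarrow1$. I would first try to exploit the fact that the isotropic decomposition transforms identically under moving the root across an edge, so that $\langle x,(t^2-tA+D-I)x\rangle$ telescopes into a sum over edges of squared differences minus a signal term. Finally, to reach $\epsilon=(\log n)^{-c}$, I will track quantitative rates: eigenvalue perturbation bounds from \cite{bordenave2018nonbacktracking} of polylogarithmic order, and concentration with depth $L$ chosen so that $(d/\mu_i^2)^{L}\ll\epsilon$.
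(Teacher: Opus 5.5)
Your upper bound $N(\sqrt d+\epsilon)\le r_+$, via Ihara--Bass zero crossings counted with multiplicity, is fine. It is essentially the deterministic lemma of Mohanty et al.\ that the paper invokes.

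\textbf{Main gap: the lower bound.} The gap sits exactly where you flag ``the main obstacle''. You never pin down the test vectors, and you never prove that the Bethe--Hessian form on them is negative. For $1<d<2$ this computation is the entire content of the theorem. It does not follow from local convergence plus orthogonality, and a hoped-for telescoping into squared differences does not supply it. The paper's argument needs the following specific choices and identities:
\begin{enumerate}
\item Test vectors in the innovation basis $X_{o,i}$, $\rho_i^{-1}X_{x,i}-X_{p_o(x),i}$, which are pairwise orthogonal given the tree. The level-$\ell$ coefficients are $(d/\mu_0)^{\ell-1}Z_{L-\ell}(x\mid o)$, with $\mu_0$ slightly above $\sqrt d$ and a \emph{fixed} depth $L$.
\item The generation identities $\bE(Z_{k-1}Z_k)=d\,\bE Z_{k-1}^2$ and $\bE Z_k^2-d\,\bE(Z_{k-1}Z_k)=\Var(\xi)d^{k-1}$. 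These cancel the interior levels against the diagonal up to $O(d^{2L}(d/\mu_0^2)^L)$.
\item The core identity $\bE[d_o(d_o-1)]=d\,\bE d_o$, which makes the degree-weighted leftovers nonnegative once $t\mu_0>d$.
\item An edge-rooted limit whose sign is that of $a(da-1)-d(a-1)\bE W^2$, where $a=\mu_i/\sqrt d$.
\end{enumerate}
Positivity of that last bracket uses $\bE W^2=\eta/(d-1)<2$ for the forward branching process of the $2$-core. The analogous quantity for the full $\Pois(d)$ tree is $d/(d-1)$, which exceeds $2$ when $d<2$. This is the concrete place the $2$-core is needed, and nothing in your plan plays this role. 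Also, with fixed $L$ the negative gap is a constant, uniform over $t\in[\sqrt d,\sqrt d+\epsilon_0]$. So on this side you do not need depth $L\asymp\log n$ (which would require far stronger concentration than local convergence) or quantitative rate tracking.

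\textbf{Smaller gap: passing to $N_\epsilon(\sqrt d)\le r_+$.} Degrees are not controlled on the $2$-core: its maximum degree is still of order $\log n/\log\log n$. So $\epsilon\langle x,(D+I)x\rangle$ is not $O(\epsilon\|x\|^2)$. Moreover, for an upper bound the harmonic extension goes the wrong way. You would need restriction instead: leaf removal gives $y^\top H(t,G)y\ge y_K^\top H(t,G^{(2)})y_K$, where $y_K$ is the restriction of $y$ to the core.

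The paper's route is simpler. Weyl's inequality does work if you shift by $\kappa=(\log n)^{-\gamma}$ rather than by $\epsilon$. Indeed, $\|H(\sqrt d+\kappa,G)-H(\sqrt d,G)\|\le 2\sqrt d\kappa+\kappa^2+\kappa\|A_G\|$ with $\|A_G\|=O(\sqrt{\log n})$. Meanwhile, every non-outlier nonbacktracking eigenvalue has modulus at most $\sqrt d+O((\log n)^{-c'})$ for every $c'<1$. Taking $c+\tfrac12<\gamma<1$ gives $N_{\epsilon_n}(\sqrt d)\le N(\sqrt d+\kappa)\le r_+$ for $\epsilon_n=(\log n)^{-c}$. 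This is exactly the source of the restriction $c<1/2$.

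Incidentally, the $t$-harmonic extension reproduces $H(t,G^{(2)})$ exactly, with core degrees; there is no diagonal correction. The norm rescaling $\|\widetilde x\|^2\approx\|x\|^2/(1-dq/t^2)$ holds for the local test vectors, not for arbitrary $x$.
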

\begin{remark}
   Stephan and Zhu proved that for $d\geq 2$, \eqref{eq:fullres} is true also for $\epsilon=(\log n)^{-c}$
for any $c>0$. While we believe \Cref{thm:main} can also be strengthened to the same range, we do not pursue it in this paper.
\end{remark}

\begin{remark}
Our theorem determines the number of negative outliers of the
Bethe--Hessian, but not the corresponding eigenspaces. In particular,
we do not prove that its eigenvectors yield weak recovery throughout
the full constant-degree regime. This was proven for sufficiently large $d$ in \cite{Stephan2025community}, and, to our knowledge, remains open for general $d>1$. 
\end{remark}

\section{Upper bounds from the nonbacktracking matrix}

We first show the upper-bound direction of \Cref{thm:main}. Namely,
with high probability,
\[
    N_\epsilon(\sqrt d),\ N(\sqrt d+\epsilon)\leq r_+,
    \qquad
    N_\epsilon(-\sqrt d),\ N(-\sqrt d-\epsilon)\leq r_-.
\]
As we will see, this follows immediately from previous work.

For a graph $G=(V,E)$, let
\[
    \vec E\deq \{(u,v):\{u,v\}\in E\}.
\]
The nonbacktracking matrix is
the $\vec E\times\vec E$ matrix
\[
    (B_G)_{(u,v),(x,y)}
    =
    \mathbf 1\{v=x,\ y\neq u\}.
\]
It is related to the Bethe--Hessian by the Ihara--Bass formula
\cite[Theorem~1.1]{kotani20002},
\begin{align}\label{eq:IharaBass}
    \det(I-zB_G)
    =z^{2|V|}(1-z^2)^{|E|-|V|}
      \det H(z^{-1},G).
\end{align}

We use the following deterministic consequence.

\begin{lemma}[{\cite[Lemma~5.4]{mohanty2024robust}}]\label{lem:ub-outlier}
Let $G$ be any graph and let $\mu>1$. The number of negative
eigenvalues of $H(\mu,G)$ is at most the number of real eigenvalues of
$B_G$ greater than $\mu$. Similarly, the number of negative
eigenvalues of $H(-\mu,G)$ is at most the number of real eigenvalues of
$B_G$ less than $-\mu$.
\end{lemma}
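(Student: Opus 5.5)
The argument is a continuity (homotopy) argument in the parameter, with the Ihara--Bass formula \eqref{eq:IharaBass} detecting when eigenvalues of $H(\cdot,G)$ cross zero. We treat $H(\mu,G)$ with $\mu>1$; the case $H(-\mu,G)$ follows by applying the same argument to $s\mapsto H(-s,G)$. Negating $s$ sends the real eigenvalues of $B_G$ greater than $s$ to those less than $-s$, so no new idea is needed there.

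First, for $s\in\RR$ the matrix $H(s,G)$ is real symmetric and its entries are polynomial in $s$, so its eigenvalues $\lambda_1(s)\le\dots\le\lambda_n(s)$ are continuous in $s$. As $s\to+\infty$ we have $H(s,G)=s^2 I_n+O(s)$, so $H(s,G)$ is positive definite for all large $s$ and has no negative eigenvalues. Let $k$ be the number of negative eigenvalues of $H(\mu,G)$. Each of these $k$ eigenvalues must become positive somewhere on $(\mu,\infty)$. By continuity of the ordered eigenvalues, $\lambda_k$ goes from negative to positive, and so do $\lambda_1,\dots,\lambda_{k-1}$, since they start negative and end positive. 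Thus $\det H(s,G)$ vanishes on $(\mu,\infty)$, and the total multiplicity of these zeros, counted as zeros of the polynomial $s\mapsto\det H(s,G)$, is at least $k$. To make the multiplicity count rigorous, I will use the analytic (Rellich) eigenvalue branches of the symmetric analytic family $H(s,G)$. Then $\det H(s,G)=\prod_i \lambda_i(s)$ with analytic $\lambda_i$. Each branch that is negative at $\mu$ and positive at $+\infty$ has a zero of odd order somewhere in $(\mu,\infty)$, so the order of vanishing of the determinant, summed over that interval, is at least $k$.

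Next, I will convert these zeros into eigenvalues of $B_G$. For $s>\mu>1$, put $z=1/s\in(0,1)$; then $z\neq0$ and $1-z^2\neq0$. By \eqref{eq:IharaBass}, $\det(I-zB_G)$ equals $\det H(1/z,G)$ times a factor that is nonvanishing and analytic near each such $z$. Hence the zeros of $z\mapsto\det(I-zB_G)$ in $(0,1/\mu)$ have the same multiplicities as the zeros of $s\mapsto \det H(s,G)$ at $s=1/z$, since $s\mapsto 1/s$ is a local biholomorphism there. Moreover $\det(I-zB_G)=\prod_j(1-z\beta_j)$ over the eigenvalues $\beta_j$ of $B_G$ counted with algebraic multiplicity. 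Therefore a zero at $z=1/s$ of order $m$ means that $s$ is a real eigenvalue of $B_G$ of algebraic multiplicity $m$. Combining with the first step, $B_G$ has at least $k$ real eigenvalues in $(\mu,\infty)$, counted with algebraic multiplicity, which proves the lemma.

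The main obstacle is the multiplicity bookkeeping. A single crossing point can absorb several eigenvalues of $H$, and an analytic branch could touch zero with even order without changing sign. Using analytic branches handles both issues: sign changes force odd order, hence order at least one per branch, and extra vanishing only increases the count, which is the favourable direction for an upper bound on $k$. The count is then matched with algebraic multiplicities of $B_G$ through the Ihara--Bass identity.
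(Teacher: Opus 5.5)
Your argument is correct. The paper does not prove this lemma itself; it cites it from \cite{mohanty2024robust} as a deterministic consequence of the Ihara--Bass identity \eqref{eq:IharaBass}, and your argument supplies exactly that consequence: continuity from the positive-definite regime $s\to\infty$, sign changes counted with multiplicity via analytic eigenvalue branches, and transfer to $B_G$ through \eqref{eq:IharaBass} at $z=1/s\in(0,1)$. Your count is with algebraic multiplicity, which is the convention in which the lemma is combined with \Cref{thm:nbw-evs} in \Cref{cor:thm-main-ub}.
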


We also use the following description of  the outlier eigenvalues of the  SBM nonbacktracking matrix.
The location estimates follow from \cite[Theorem~4]{bordenave2018nonbacktracking},
while the fact they are real follows from \cite[Theorem~5]{Stephan2025community}.

\begin{theorem}\label{thm:nbw-evs}
Suppose that the fixed SBM parameters satisfy \eqref{eq:const-deg},
and order the eigenvalues of $Q$ as in \eqref{eq:evs}. Then, for every fixed $0<c<1$, with
probability $1-\oo_n(1)$, the matrix $B_ G$ has
$r_++r_-$ real outlier eigenvalues, such that for each eigenvalue $\mu_i$ of $Q$ with $|\mu_i|>\sqrt{d}$, there is a real eigenvalue $\lambda_i$ of $B_{ G}$ such that 
\[
    \lambda_i(B_ G)=\mu_i+\OO((\log n)^{-c}).
\]
Every remaining eigenvalue satisfies
\[
    |\lambda(B_ G)|\leq\sqrt d+\OO((\log n)^{-c}).
\]
\end{theorem}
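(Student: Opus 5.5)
The plan is to assemble the statement from the two cited results rather than reprove anything spectral from scratch; the work is in checking that the hypotheses line up and that the error rates are as claimed. The first step is to verify that the SBM here, under \eqref{eq:const-deg}, is in the setting of \cite[Theorem~4]{bordenave2018nonbacktracking}. That theorem concerns the block model with fixed $r$, $\pi$, $P$ and constant expected degree $d>1$. It compares the spectrum of $B_G$ with that of $Q$, with eigenvalues counted with multiplicity. Its conclusion is that, with probability $1-\oo_n(1)$:
\begin{itemize}
\item for each $i$ with $|\mu_i|>\sqrt d$, there is an eigenvalue $\lambda_i(B_G)$ with $\lambda_i(B_G)=\mu_i+\oo(1)$;
\item all remaining eigenvalues lie in the disk of radius $\sqrt d+\oo(1)$.
\end{itemize}
So I will restate \cite[Theorem~4]{bordenave2018nonbacktracking} with these identifications. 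I also need the quantitative version of the $\oo(1)$ terms. Its proof bounds $B^\ell$ with $\ell\asymp \log n$ through a matrix perturbation argument, which is where a rate comes from: the outlier errors are of order a power of $1/\ell$ up to logarithmic factors, giving $\OO((\log n)^{-c})$ for any $c<1$. I would cite this sharpened form; it is the form recorded in \cite{Stephan2025community}. In particular there are exactly $r_++r_-$ eigenvalues outside the disk of radius $\sqrt d+\OO((\log n)^{-c})$.

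The second step is reality of the outliers. Because $B_G$ is a real matrix, its non-real eigenvalues come in conjugate pairs. Suppose $\mu_i$ is a simple eigenvalue of $Q$ and is separated from the other eigenvalues of $Q$. Then, for large $n$, exactly one eigenvalue of $B_G$ lies in a small disk around $\mu_i$. That disk is symmetric under conjugation, so this eigenvalue equals its own conjugate and is therefore real.

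This argument breaks down when $Q$ has a repeated informative eigenvalue, or informative eigenvalues closer together than the error scale. In that case several eigenvalues of $B_G$ may cluster near $\mu_i$, and a conjugate pair is not excluded by counting. This is the main obstacle. Here I would invoke \cite[Theorem~5]{Stephan2025community}, which shows that with high probability all outlier eigenvalues of $B_G$ are real in exactly this constant-degree setting. Their mechanism is a symmetrization of $B_G$ via the parity/involution operator $J$ with $JB=B^{*}J$. They restrict the resulting $J$-symmetric quadratic form to the near-outlier invariant subspace, and show it is definite there, which forces a real spectrum on that subspace.

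Finally, I would combine the two results on the intersection of their high-probability events, which still has probability $1-\oo_n(1)$. This gives $r_++r_-$ real outliers with $\lambda_i(B_G)=\mu_i+\OO((\log n)^{-c})$, and all other eigenvalues in the disk of radius $\sqrt d+\OO((\log n)^{-c})$. The sign pattern is then immediate. Each outlier is within $o(1)$ of $\mu_i$, with $|\mu_i|>\sqrt d$ fixed, so the outliers near positive informative eigenvalues exceed $\sqrt d$ and the ones near negative informative eigenvalues are below $-\sqrt d$. This is the form needed to feed into \Cref{lem:ub-outlier}.
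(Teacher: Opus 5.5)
Your proposal matches the paper's argument: the paper likewise takes the outlier locations from \cite[Theorem~4]{bordenave2018nonbacktracking} and extracts the $\OO((\log n)^{-c})$ rate from its proof. It takes reality of the outliers from \cite[Theorem~5]{Stephan2025community}, and points specifically to \cite[Propositions~8 and~11]{bordenave2018nonbacktracking} for the rate, rather than to a restatement in \cite{Stephan2025community}. Your conjugation-symmetry argument for simple, isolated informative eigenvalues is correct but not needed, since the cited reality theorem already covers every case, including repeated eigenvalues.
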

\begin{remark}
\cite[Theorem 4]{bordenave2018nonbacktracking} states the preceding
bounds with error $\oo_n(1)$, however the proof gives the quantitative form
needed here through \cite[Proposition 8
and Proposition 11]{bordenave2018nonbacktracking}.
\end{remark}

\begin{corollary}\label{cor:thm-main-ub}
Suppose $ G\sim\SBM_n(P,\pi)$ satisfies
\eqref{eq:const-deg}--\eqref{eq:pmevs}. Then, for every sufficiently
small constant $\epsilon>0$, with high probability,
\[
    N_\epsilon(\sqrt d),\ N(\sqrt d+\epsilon)\leq r_+,
    \qquad
    N_\epsilon(-\sqrt d),\ N(-\sqrt d-\epsilon)\leq r_-.
\]
The same conclusion holds for
$\epsilon=\epsilon_n=(\log n)^{-c}$ for every $0<c<1/2$.
\end{corollary}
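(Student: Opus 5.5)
We combine \Cref{lem:ub-outlier} with \Cref{thm:nbw-evs}, plus one extra step to handle $N_\epsilon(\pm\sqrt d)$, which counts eigenvalues below $-\epsilon$ at the threshold itself rather than negative eigenvalues at a shifted parameter. We treat $t>0$; the case $t<0$ is symmetric, since $H(-t,G)$ has the same structure and \Cref{lem:ub-outlier} counts real eigenvalues of $B_G$ below $-\mu$.

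\textbf{Step 1: the count $N(\sqrt d+\epsilon)$.} Fix $\epsilon$ smaller than half the gap $\min_{i\le r_+}(\mu_i^+-\sqrt d)$. Take $\epsilon_n=(\log n)^{-c}$ with $c<1/2$; we would use \Cref{thm:nbw-evs} with some exponent $c'\in(c,1)$. Then with high probability every real eigenvalue of $B_G$ greater than $\sqrt d+\epsilon$ is one of the outliers $\lambda_i$ with $\mu_i>\sqrt d$. Indeed, every non-outlier eigenvalue has modulus at most $\sqrt d+\OO((\log n)^{-c'})<\sqrt d+\epsilon$. Negative outliers are near $\mu_i^-<0$. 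Hence there are at most $r_+$ such real eigenvalues, and \Cref{lem:ub-outlier} with $\mu=\sqrt d+\epsilon>1$ gives $N(\sqrt d+\epsilon)\le r_+$.

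\textbf{Step 2: the count $N_\epsilon(\sqrt d)$.} Here we use a deterministic perturbation in $t$. We have
\[
H(\sqrt d+\delta,G)-H(\sqrt d,G)=(2\sqrt d\,\delta+\delta^2)I-\delta A_G .
\]
Since $A_G$ may have unbounded norm because of high-degree vertices, we do not bound this difference in operator norm. Instead we pass through the quadratic form. For a unit vector $x$,
\[
\langle x,H(t,G)x\rangle=t^2-t\langle x,Ax\rangle+\langle x,(D-I)x\rangle .
\]
Because $D-A\succeq 0$, we get $\langle x,Ax\rangle\le \langle x,Dx\rangle$.

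Suppose $\langle x,H(\sqrt d,G)x\rangle\le-\epsilon$. Then the form is linear in $\langle x,Ax\rangle$ with derivative in $t$ equal to $2t-\langle x,Ax\rangle$, so
\[
\langle x,H(t,G)x\rangle=\langle x,H(\sqrt d,G)x\rangle+(t-\sqrt d)\bigl(t+\sqrt d-\langle x,Ax\rangle\bigr).
\]
If $\langle x,Ax\rangle\le t+\sqrt d$, the second term is at most $(t-\sqrt d)(t+\sqrt d)$. Choosing $t=\sqrt d+\delta$ with $\delta\approx \epsilon/(4\sqrt d)$ then keeps the form negative.

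If instead $\langle x,Ax\rangle> 2\sqrt d+\delta$, we do not need the perturbation at all. Rearranging $\langle x,H(\sqrt d,G)x\rangle\le-\epsilon$ gives $\langle x,(D-I)x\rangle\le \sqrt d\langle x,Ax\rangle-d-\epsilon$. Substituting this into the expression at $t$ shows
\[
\langle x,H(t,G)x\rangle\le t^2-d-(t-\sqrt d)\langle x,Ax\rangle-\epsilon ,
\]
which is negative once $\langle x,Ax\rangle\ge t+\sqrt d$.

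In either case, on the span $W$ of eigenvectors of $H(\sqrt d,G)$ with eigenvalues $\le-\epsilon$, every unit $x$ satisfies $\langle x,H(t,G)x\rangle<0$. This holds because $\langle x,H(\sqrt d)x\rangle\le-\epsilon$ for all unit $x\in W$, and the bound above is uniform in $x$. By Courant--Fischer, $N(\sqrt d+\delta)\ge N_\epsilon(\sqrt d)$. Applying Step 1 with $\delta$ in place of $\epsilon$ then gives $N_\epsilon(\sqrt d)\le r_+$. We need $\delta$ below the outlier gap, which holds for $\epsilon$ small. For $\epsilon=(\log n)^{-c}$ we have $\delta\asymp(\log n)^{-c}$, which still dominates the $(\log n)^{-c'}$ error.

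\textbf{Main obstacle.} The only delicate point is Step 2. We must avoid any operator-norm bound on $A_G$, which is unbounded in the sparse regime. We also must not simply assert monotonicity of $H(t)$ in $t$, which fails in general. The case split on $\langle x,Ax\rangle$ above is what I would try first. I would check carefully that the resulting bound is uniform over the subspace $W$.
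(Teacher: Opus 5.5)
Your proposal is correct. Step 1 matches the paper, but your treatment of $N_\epsilon(\pm\sqrt d)$ takes a genuinely different route.

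The paper compares $H(\sqrt d,G)$ with $H(\sqrt d+\kappa,G)$ using Weyl's inequality. That requires the high-probability bound $\|A_G\|=\OO(\sqrt{\log n})$ and forces $\kappa=(\log n)^{-\gamma}$ with $c+\tfrac12<\gamma<1$. This is the only source of the restriction $c<1/2$.

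You instead make a one-sided comparison of quadratic forms on the span $W$ of eigenvectors of $H(\sqrt d,G)$ with eigenvalues below $-\epsilon$. Your Case 1 bound $(t-\sqrt d)(t+\sqrt d-\langle x,Ax\rangle)\le t^2-d$ silently needs $\langle x,Ax\rangle\ge 0$, so you should state and justify this. It does hold for every unit $x\in W$: $\langle x,H(\sqrt d,G)x\rangle\le-\epsilon$ together with $\langle x,Dx\rangle\ge 0$ gives $\sqrt d\,\langle x,Ax\rangle\ge d-1+\epsilon>0$, using $d>1$. Once this is noted, the case split is unnecessary. You then get the deterministic inequality $N_\epsilon(\sqrt d)\le N(t)$ for every $\sqrt d\le t<\sqrt{d+\epsilon}$, and the sign-reversed version for negative $t$, where $\langle x,Ax\rangle<0$ on $W$.

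Your route needs no estimate on $\|A_G\|$, and the shift in $t$ is of order $\epsilon$ rather than $o(\epsilon/\sqrt{\log n})$. Combined with \Cref{thm:nbw-evs}, it gives the upper bound for $\epsilon_n=(\log n)^{-c}$ for every $0<c<1$. The paper's lower bound holds at a fixed negative level, so this would extend the last claim of \Cref{thm:main} to the same range. The paper's Weyl argument is two-sided and does not use the sign structure of $H$, but it pays for that generality with the $\sqrt{\log n}$ loss.
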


\begin{proof}
Let $\epsilon_*>0$ be such that for every informative eigenvalue $\mu_i$,
\[
    |\mu_i|\geq\sqrt d+\epsilon_*.
\]
Fix $0<c<1$, and suppose
\[
    (\log n)^{-c}\leq\epsilon\leq\epsilon_*/2.
\]
Applying \Cref{thm:nbw-evs} with an exponent strictly larger than $c$,
we find that, with high probability, $B_ G$ has exactly $r_+$ real
eigenvalues greater than $\sqrt d+\epsilon$ and exactly $r_-$ real
eigenvalues less than $-\sqrt d-\epsilon$. Hence
\[
    N(\sqrt d+\epsilon)\leq r_+,
    \qquad
    N(-\sqrt d-\epsilon)\leq r_-.
\]

Now fix $0<c<1/2$, let $\epsilon_n=(\log n)^{-c}$, and choose
$\gamma$ such that
\[
    c+\frac12<\gamma<1.
\]
Set $\kappa=(\log n)^{-\gamma}$. By Weyl's inequality,
\begin{align*}
    \left|
        \lambda_i(H(\sqrt d, G))
        -\lambda_i(H(\sqrt d+\kappa, G))
    \right|
    &\leq
    2\sqrt d\,\kappa+\kappa^2+\kappa\|A_ G\|.
\end{align*}
Since $\|A_ G\|=O(\sqrt{\log n})$ with high probability
\cite[Corollary~1.7(b)]{BBK19}, the right-hand side is
$o(\epsilon_n)$. Applying the previous bound  $N(\sqrt d+\kappa)\leq r_+$ gives
\[
    N_{\epsilon_n}(\sqrt d)\leq r_+.
\]
The negative case is identical. The result for fixed $\epsilon>0$
follows as $\epsilon_n<\epsilon$ for all sufficiently large $n$.
\end{proof}

\section{The $2$-core and its local limit}

We now prove the lower bound. We first treat the positive informative
eigenvalues; the negative case is reduced to this one at the end of the lower bound argument. We start with the $2$-core, where we do most of our analysis.

\begin{definition}
The $2$-core of a graph $G$, denoted by $G^{(2)}$, is its maximal
induced subgraph of minimum degree at least $2$.
\end{definition}

Let $ T_{P,\pi}$ be the rooted tree in which every vertex is assigned a type in $[r]$, constructed as follows.
The root type has distribution $\pi$, and a vertex of type $a$ has,
independently for each $b\in[r]$, $\Pois(Q_{ab})$ children
of type $b$. Given this infinite tree, we can consider its $2$-core as follows.
\begin{definition}
    Let $ T^{(2)}$ be the subtree of $ T_{P,\pi}$ induced by the
vertices lying on a bi-infinite simple path. Whenever we regard
$ T^{(2)}$ as a rooted random tree, we condition on the root belonging
to $ T^{(2)}$.
\end{definition}

As
\[
    \sum_{b=1}^r Q_{ab}=d
\]
for every type $a$, the unlabelled geometry of $ T_{P,\pi}$ is that of a
$\Pois(d)$ Galton--Watson tree. We now describe its $2$-core.

\begin{lemma}\label{lem:core-geometry}
Let $q$ be the extinction probability of a $\Pois(d)$
Galton--Watson process and set
\[
    \eta=d(1-q).
\]
Then
\begin{enumerate}
\item We have
\[
    q=\re^{-\eta}.
\]

\item
If $u$ is the parent of $v$ in $ T^{(2)}$, then, after deleting the
edge $\{u,v\}$, the component containing $v$, rooted at $v$, is a
Galton--Watson tree with offspring distribution
\be\label{eq:forward-offspring}
    \xi_1\sim\Pois(\eta)\qquad \xi_1\mid\{\xi_1\geq 1\},
\ee
with $\bE\xi_1=d$.
\item
The degree of the root in $ T^{(2)}$ has distribution
\[
    \xi_2\sim \Pois(\eta)\qquad \xi_2\mid\{\xi_2\geq2\}.
\]
\item
A $\Pois(d)$ Galton--Watson tree conditioned on extinction is
a Galton--Watson tree with offspring distribution $\Pois(dq)$.
\end{enumerate}
\end{lemma}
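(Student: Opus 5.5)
The plan is to use Poisson thinning throughout. Mark each vertex of the $\Pois(d)$ Galton--Watson tree as \emph{surviving} if its descendant subtree is infinite. Each vertex survives independently with probability $1-q$, and the subtrees of distinct children are independent copies of the whole tree.

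\emph{Part 1.} The extinction probability $q$ is the smallest root in $[0,1]$ of $q=\re^{d(q-1)}$, the Poisson generating function. This can be rewritten as $q=\re^{-d(1-q)}=\re^{-\eta}$.

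\emph{Thinning.} By the thinning property of the Poisson distribution, the numbers of surviving and non-surviving children of a vertex are independent, distributed as $\Pois(d(1-q))=\Pois(\eta)$ and $\Pois(dq)$ respectively. The tree is independent of its labels: the marks are determined by the subtrees, and the subtrees of children are i.i.d. Hence, conditionally on the counts, the subtrees of surviving children are i.i.d. copies conditioned to survive, and those of non-surviving children are i.i.d. copies conditioned to die out.

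\emph{Part 4.} Conditioning on extinction means conditioning the root to have zero surviving children. Its offspring count is then the independent $\Pois(dq)$ part, and each child's subtree is again a copy conditioned on extinction. Iterating gives a Galton--Watson tree with offspring $\Pois(dq)$.

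\emph{Part 2.} Deleting $\{u,v\}$ when $u$ is the parent of $v$ in $T^{(2)}$, the component of $v$ inside $T^{(2)}$ consists of the descendants of $v$ lying on infinite rays. So $v$'s children in $T^{(2)}$ are exactly its surviving children. Since $v\in T^{(2)}$ with $u$ its parent, $v$ must have an infinite line of descent, i.e.\ $v$ survives. By thinning, conditioning $v$ to survive conditions its $\Pois(\eta)$ count of surviving children to be at least $1$. Each such child is again a surviving vertex with the same law, so the tree is Galton--Watson with offspring law $\Pois(\eta)$ conditioned on $\{\xi_1\ge1\}$. Its mean is
\[
\frac{\eta}{1-\re^{-\eta}}=\frac{d(1-q)}{1-q}=d,
\]
using Part 1.

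\emph{Part 3.} This step needs care with the meaning of ``root in $T^{(2)}$''. The definition asks for a bi-infinite simple path through the root. Since the root has no parent within the tree, this means the root has at least two surviving children, and its degree in $T^{(2)}$ is the number of surviving children. That count is $\Pois(\eta)$, and conditioning on the root being in the core gives the law conditioned on $\ge2$.

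The main subtlety is this interpretation, together with the treatment of non-root vertices. A non-root vertex $v$ is in $T^{(2)}$ iff it survives and (its parent is in $T^{(2)}$, or $v$ has at least two surviving children), so in the core every non-root vertex has its parent in the core. In Part 2, $v$'s core-children are the surviving children, so no extra condition on $v$ arises beyond survival. The one remaining point to check is that the event $\{u\in T^{(2)}\}$ depends on $v$'s subtree only through the survival of $v$. This holds because $u$'s membership is determined by the survival indicators of $u$'s children and the status of $u$'s ancestors. Hence conditioning on $u$ being a core vertex with core child $v$ conditions $v$'s subtree only on survival, as used above.
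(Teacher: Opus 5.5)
Your proof is correct, and it reaches Parts 2 and 4 by a somewhat different route from the paper. The paper cites the general reduced-tree decomposition \cite[Proposition~5.28]{lyons2017probability}. That result gives the offspring generating function $\bigl(f(q+(1-q)s)-q\bigr)/(1-q)$ for the surviving tree conditioned on survival, and $f(qs)/q$ for the tree conditioned on extinction; the paper then evaluates both at $f(s)=\re^{d(s-1)}$ and uses Poisson thinning only in Part 3.

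You instead derive Parts 2--4 directly from the Poisson marking theorem. This has two advantages:
\begin{itemize}
\item It is self-contained, and it gives at once the independence of the surviving and extinct child counts, which the paper needs again in Lemma~\ref{lem:core-law}.
\item It makes explicit a step the paper leaves implicit. The cited proposition describes the reduced tree rooted at the original root, whereas Part 2 concerns the forward branch below an arbitrary core edge. Your characterization of core membership for non-root vertices supplies this step, together with your check that $\{u\in T^{(2)}\}$ depends on the subtree of $v$ only through the survival of $v$.
\end{itemize}
The paper's route is shorter, and its generating-function formulas hold for arbitrary offspring laws.

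Two wording fixes are needed. First, survival is independent across the children of a common vertex, not across all vertices, since an ancestor's survival depends on its descendants'. Second, the sentence ``The tree is independent of its labels'' should instead say that the survival marks of siblings are i.i.d.\ because they are functions of i.i.d.\ subtrees.
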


\begin{proof}

\begin{enumerate}
    \item 

By \cite[Proposition~5.4]{lyons2017probability}, $q$ is the smallest
solution of the fixed-point equation
\[
    q=\exp(d(q-1)).
\]
Thus
\[
    q=\exp(-d(1-q))=\re^{-\eta}.
\]
\item
By \cite[Proposition~5.28]{lyons2017probability}, if $f$ is the
probability generating function of the number of offspring, then the subtree consisting of the
vertices having an infinite line of descent, conditional on being
nonempty, has offspring generating function
\[
    \frac{f(q+(1-q)s)-q}{1-q}.
\]
As the offspring distribution is $\Pois(d)$, $f(s)=\exp(d(s-1))$, and this becomes
\[
    \frac{\re^{\eta(s-1)}-\re^{-\eta}}
         {1-\re^{-\eta}},
\]
which is the generating function of a $\Pois(\eta)$ random variable
conditioned to be positive. This proves
\eqref{eq:forward-offspring}. Furthermore,
\[
    \bE\xi
    =
    \frac{\eta}{1-\re^{-\eta}}
    =
    \frac{d(1-q)}{1-q}
    =
    d.
\]
\item 
At the original root, the number of children having an infinite line
of descent is $\Pois(\eta)$ by Poisson thinning. The root belongs to
$ T^{(2)}$ precisely when at least two such children are present,
which gives the asserted root-degree distribution.

\item
Also by \cite[Proposition~5.28]{lyons2017probability}, the process conditioned on
extinction has offspring probability generating function
\[
    \frac{f(qs)}q.
\]
In the Poisson case,
\[
    \frac{f(qs)}q
    =
    \frac{\re^{d(qs-1)}}q
    =
    \re^{dq(s-1)},
\]
which is the generating function of $\Pois(dq)$.
\end{enumerate}
\end{proof}

We next add the labels. The following lemma shows that equal-degree assumption \eqref{eq:const-deg}
 makes the survival probability of each vertex in $T$ independent of its type.

\begin{lemma}\label{lem:core-law}
The root type in $ T^{(2)}$ has distribution $\pi$. Conditional on a core vertex having type $a$, the types of its core
children are independent, each with distribution
\[
    \bP(\sigma(x)=b\mid \sigma(\text{parent of }x)=a)
    =
    \frac{Q_{ab}}d.
\]
In particular, the unlabelled geometry of $ T^{(2)}$ is independent
of its labels.

Conditional on the labelled $2$-core, the off-core trees attached to
distinct core vertices are independent. 
For every vertex $v$ in $T^{(2)}$, 
the unlabelled geometry of the descendants of $v$ in $T\setminus T^{(2)}$ is distributed as a
Galton--Watson tree with offspring distribution $\Pois(dq)$.
\end{lemma}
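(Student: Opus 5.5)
The argument rests on one observation: under \eqref{eq:const-deg}, the event that a subtree survives depends only on the unlabelled Galton--Watson geometry. That geometry is $\Pois(d)$ regardless of the type, because for a vertex of type $a$ the total offspring $\sum_b \Pois(Q_{ab})$ is $\Pois(d)$. The plan is to work in the multitype tree $T_{P,\pi}$ and use Poisson thinning by type, and by survival versus extinction.

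\textbf{Step 1: splitting the children of a vertex.} Fix a vertex $v$ of type $a$. Its children of type $b$ form a $\Pois(Q_{ab})$ collection, and each child independently carries an infinite line of descent with probability $1-q$. This probability does not depend on $b$: the subtree of a type-$b$ child is again an unlabelled $\Pois(d)$ Galton--Watson tree, and the labels play no role in whether it survives. By thinning, the surviving children of type $b$ and the extinct children of type $b$ are independent, with laws $\Pois(Q_{ab}(1-q))$ and $\Pois(Q_{ab}q)$, and these are independent across $b$.

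Consequences:
\begin{itemize}
\item The total number of surviving children is $\Pois(\eta)$.
\item Conditional on that total, the types of the surviving children are i.i.d.\ with law $Q_{ab}(1-q)/\eta=Q_{ab}/d$.
\item The extinct children form an independent $\Pois(dq)$ family, and each spans a Galton--Watson tree conditioned on extinction. By \Cref{lem:core-geometry}(4), such a tree is $\Pois(dq)$ Galton--Watson.
\end{itemize}

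\textbf{Step 2: the core children.} The core children of a non-root core vertex are exactly its surviving children. Conditioning on the vertex being in the core only conditions on having at least one surviving child, and this event is measurable with respect to the count. So the types of the core children remain i.i.d.\ with law $Q_{ab}/d$, independent of the count and of the types of the vertex's ancestors. The same holds at the root, where the conditioning is on having at least two surviving children.

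\textbf{Step 3: the root type.} The root type is $\pi$ in $T_{P,\pi}$. The event that the root lies in the core, namely at least two surviving children, has probability depending only on the $\Pois(\eta)$ count. That count is independent of the root type, since its law $\Pois(\eta)$ is the same for every $a$. Hence conditioning on this event leaves the root type distributed as $\pi$.

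\textbf{Step 4: iterating.} Applying Steps 1--2 generation by generation, the core is generated by a process in which:
\begin{itemize}
\item the offspring counts have laws independent of the labels, as in \Cref{lem:core-geometry};
\item the labels evolve as a Markov chain with kernel $Q/d$ along the edges.
\end{itemize}
So the unlabelled geometry is independent of the labels. The extinct trees hanging off distinct core vertices come from disjoint independent Poisson families. They are independent of the surviving families, and hence of the labelled core. Each one's unlabelled geometry is $\Pois(dq)$ Galton--Watson, and a core vertex's extinct children number $\Pois(dq)$ as well. Therefore the off-core descendants of $v$ form a $\Pois(dq)$ Galton--Watson tree rooted at $v$, and these trees are independent across core vertices given the labelled core.

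\textbf{Main obstacle.} The delicate point is Step 2: one must check that conditioning on membership in the core, which is a global event about infinite descent, does not bias the types. The resolution is that survival is decided subtree by subtree, independently of labels. Formally I would condition on the whole unlabelled tree first, and then note that, given the geometry, the labels are a Markov chain with kernel $Q/d$. Indeed, the type-resolved joint law factorizes as the unlabelled Poisson law times the product of $Q_{ab}/d$ over edges, because $\prod_b \Pois(Q_{ab})$ with multinomial type assignment equals $\Pois(d)$ times i.i.d.\ types. Since the core is a function of the geometry alone, all the claims then follow at once.
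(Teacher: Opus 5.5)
Your proposal is correct and follows essentially the paper's argument. Both proofs thin the children of a type-$a$ vertex by type and by survival, use that the survival probability $1-q$ and the $\Pois(\eta)$ surviving count do not depend on type, and invoke part (4) of \Cref{lem:core-geometry} for the extinct subtrees. Your closing step, conditioning on the unlabelled $\Pois(d)$ tree first (so the labels form a $Q/d$ Markov chain and the core is a function of the geometry alone), is a clean way to make rigorous the paper's brief ``same argument at every generation'' step.
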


\begin{proof}
Suppose that a vertex has type $a$. In the whole of $T$, its numbers of
children of the different types are independent random variables with
distributions
\[
    \Pois(Q_{a1}),\ldots,\Pois(Q_{ar}).
\]
The descendant tree of each child survives with probability $1-q$,
independently of the other children. This probability does not depend
on the child's type, since every type has total offspring distribution
$\Pois(d)$.

Poisson thinning therefore shows that the numbers of 
children of types $b\in[r]$ that survive in $T^{(2)}$ are independent with distributions
\[
    \Pois\bigl((1-q)Q_{ab}\bigr).
\]
Conditional on their total number, their types are consequently
independent with probabilities
\[
    \frac{(1-q)Q_{ab}}{\sum_c(1-q)Q_{ac}}
    =
    \frac{Q_{ab}}d.
\]
The total number of surviving children has distribution
$\Pois(\eta)$, independently of the parent type. Hence conditioning on
the root belonging to the $2$-core does not change the type
distribution, which remains $\pi$. The same argument at every
generation shows that the unlabelled core geometry is independent of
the labels.

The numbers of children of type $b\in [r]$ whose descendant trees become extinct are,
by the other part of the same Poisson thinning, independent random
variables with distributions
\[
    \Pois(qQ_{ab}).
\]
They are also independent of the surviving children. The branching
property and the final assertion of \Cref{lem:core-geometry} therefore
show that, conditional on the labelled core, the deleted unlabelled
trees are independent $\Pois(dq)$ Galton--Watson trees.
\end{proof}

We now introduce some well known facts about branching processes.
\begin{lemma}\label{lem:branching-limit}
Let $Z_k$ be the generation sizes of the Galton--Watson process with
offspring distribution \eqref{eq:forward-offspring}.
Then
\begin{align}\label{eq:generation-identities}
    \bE(Z_{k-1}Z_k)&=d\,\bE Z_{k-1}^2,\nonumber\\
    \bE Z_k^2-d\,\bE(Z_{k-1}Z_k)
    &=\Var(\xi)d^{k-1}.
\end{align}

Moreover,
\[
    \frac{Z_k}{d^k}\longrightarrow W
\]
in $L^2$, where
\begin{align}\label{eq:W-moments}
    \bE W=1,
    \qquad
    \bE W^2=\frac{\eta}{d-1}<2.
\end{align}

\end{lemma}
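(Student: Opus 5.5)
The plan is to use the branching property of $(Z_k)$, with $Z_0=1$, to obtain the first two moments by conditioning on $Z_{k-1}$. Then $M_k\deq Z_k/d^k$ is an $L^2$-bounded martingale, and the constants reduce to explicit moments of the conditioned Poisson offspring law $\xi$ from \eqref{eq:forward-offspring}.

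\textbf{The identities \eqref{eq:generation-identities}.} Given $Z_{k-1}$, the variable $Z_k$ is a sum of $Z_{k-1}$ independent copies of $\xi$. Since $\bE\xi=d$ by \Cref{lem:core-geometry}, we get $\bE[Z_k\mid Z_{k-1}]=dZ_{k-1}$. Multiplying by $Z_{k-1}$ and taking expectations gives $\bE(Z_{k-1}Z_k)=d\,\bE Z_{k-1}^2$. For the second identity, use
\[
\bE[Z_k^2\mid Z_{k-1}]=\Var(\xi)Z_{k-1}+d^2Z_{k-1}^2
\]
together with $\bE Z_{k-1}=d^{k-1}$. This yields $\bE Z_k^2=\Var(\xi)d^{k-1}+d\cdot d\,\bE Z_{k-1}^2$. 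Substituting the first identity for $d\,\bE Z_{k-1}^2$ gives exactly $\bE Z_k^2-d\,\bE(Z_{k-1}Z_k)=\Var(\xi)d^{k-1}$.

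\textbf{Convergence and first moment of $W$.} The computation above shows that $M_k$ is a martingale with $\bE M_k=1$. Dividing the recursion by $d^{2k}$ gives $\bE M_k^2=\bE M_{k-1}^2+\Var(\xi)d^{-k-1}$. Summing from $k=1$ with $\bE M_0^2=1$ gives
\[
\sup_k\bE M_k^2=1+\frac{\Var(\xi)}{d(d-1)}<\infty,
\]
which is finite because $d>1$. By the $L^2$ martingale convergence theorem, $M_k\to W$ in $L^2$. Hence $\bE W=1$ and $\bE W^2=1+\Var(\xi)/(d(d-1))$.

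\textbf{Computing $\bE W^2$.} Since $\xi$ is $\Pois(\eta)$ conditioned on $\{\xi\ge1\}$, we have $\bE\xi^2=(\eta+\eta^2)/(1-\re^{-\eta})$. Using $1-\re^{-\eta}=1-q$ and $\eta=d(1-q)$ from \Cref{lem:core-geometry}, this equals $d(1+\eta)$. Therefore $\Var(\xi)=d(1+\eta-d)$, and
\[
\bE W^2=1+\frac{1+\eta-d}{d-1}=\frac{\eta}{d-1}.
\]

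\textbf{The bound $\eta/(d-1)<2$.} This is the only step that is not mechanical, and the plan is to reduce it to an elementary inequality in $\eta$. Writing $d=\eta/(1-\re^{-\eta})$, the claim $\eta<2(d-1)$ is equivalent to
\[
\eta\,\frac{1+\re^{-\eta}}{1-\re^{-\eta}}>2,
\qquad\text{that is,}\qquad
x\coth x>1 \ \text{ with } x=\eta/2>0.
\]
Here $\eta>0$ because $q<1$ when $d>1$. The last inequality is equivalent to $\tanh x<x$ for $x>0$, which holds since $\tanh' x=1-\tanh^2 x<1$ and $\tanh 0=0$.
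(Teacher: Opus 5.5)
Your proposal is correct and follows essentially the same route as the paper: condition on $Z_{k-1}$ to get both identities, derive the recursion $\bE M_k^2=\bE M_{k-1}^2+\Var(\xi)d^{-k-1}$ for the martingale $M_k=Z_k/d^k$, and use $\Var(\xi)=d(1+\eta-d)$ to obtain $\eta/(d-1)$. The differences are cosmetic: you invoke $L^2$ martingale convergence directly instead of citing Harris, you derive $\bE\xi^2$ explicitly where the paper just states $\Var(\xi)$, and you check the final inequality as $\tanh x<x$ with $x=\eta/2$ rather than by differentiating $\eta-2+(\eta+2)\re^{-\eta}$, which is the same inequality in another form.
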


\begin{proof}
Conditional on $Z_{k-1}$, we have
\[
    \bE[Z_k\mid Z_{k-1}]
    =
    dZ_{k-1}
\]
and
\[
    \bE[Z_k^2\mid Z_{k-1}]
    =
    d^2Z_{k-1}^2+\Var(\xi)Z_{k-1}.
\]
The first equation immediately gives $\bE[Z_k]=d^k$ and $\bE(Z_{k-1}Z_k)=d\,\bE Z_{k-1}^2$. Combining with the second, we get that
\begin{align*}
    \bE Z_k^2-d\,\bE(Z_{k-1}Z_k)=\bE[\bE[Z_k^2-d^2Z_{k-1}^2\mid Z_{k-1}]]=\bE(\Var(\xi)Z_{k-1})=\Var(\xi)d^{k-1}.
\end{align*}
This proves
\eqref{eq:generation-identities}.

For \eqref{eq:W-moments},
the $L^2$ convergence is the standard Galton--Watson martingale theorem
\cite[Theorem~8.1]{harris1963theory}. It is clear that $\bE(W)=1$. For the variance, observe that for $W_k\deq Z_k/d^k$, the above gives
\begin{align*}
    \bE(W_k^2)=\bE(W_{k-1}^2)+\frac{\Var(\xi)}{d^{k+1}}\Longrightarrow \bE(W^2)=1+\Var(\xi)\sum_{k=1}^\infty\frac{1}{d^{k+1}}=1+\frac{\Var(\xi)}{d(d-1)}.
\end{align*}
Since
\[
    \Var(\xi)=d(\eta+1)-d^2,
\]
we get that
\[
    \bE W^2
    =1+\frac{\Var(\xi)}{d(d-1)}
    =\frac{\eta}{d-1}.
\]
Using $d=\eta/(1-\re^{-\eta})$, the inequality $\bE W^2<2$ is equivalent
to
\[
    \eta-2+(\eta+2)\re^{-\eta}>0.
\]
The left-hand side vanishes at $0$ and has derivative
$1-(\eta+1)\re^{-\eta}>0$ for $\eta>0$.
\end{proof}

Another important tool is the fact that well-behaved local functions on the 2-core converge to their tree limit \cite{BJR07,Rio08}.

\begin{proposition}[ {\cite[Lemma~11.11]{BJR07}}]\label{prop:local-averages}
Fix $R\geq0$, and let $f(v,G)$ depend only on the radius-$R$
type-labelled neighborhood of $v$ in $G$, with each vertex marked
according to whether it belongs to $G^{(2)}$. Suppose that $f$ is
bounded by a fixed polynomial in the number of vertices in this
neighborhood. Then
\begin{align}\label{eq:local-average}
    \frac1{|V( G^{(2)})|}
    \sum_{v\in V( G^{(2)})}f(v, G)
    \overset \bP\longrightarrow
    \bE\left[
        f(o, T_{P,\pi})
        \,\middle|\,
        o\in V( T^{(2)})
    \right],
\end{align}
where the vertices of $ T_{P,\pi}$ are marked according to membership
in its $2$-core.
\end{proposition}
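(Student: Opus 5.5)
This is a citation of \cite[Lemma~11.11]{BJR07}, so the plan is to derive it from the local weak convergence of the sparse SBM to $T_{P,\pi}$, combined with the locality of $2$-core membership. I will prove the statement first with denominator $n$ and sum over all $v$ of $f(v,G)\mathbf 1\{v\in G^{(2)}\}$, and then divide by $|V(G^{(2)})|/n$. The first step is standard: for $f$ depending only on radius-$R$ labelled neighborhoods and polynomially bounded,
\[
    \frac1n\sum_{v}g(v,G)\overset{\bP}{\longrightarrow}\bE\, g(o,T_{P,\pi}).
\]
One proves this by a second moment computation. The exploration of $B_R(v)$ is coupled with the multitype Poisson tree with error $\OO(n^{-1+o(1)})$, and two uniformly chosen vertices have disjoint, asymptotically independent neighborhoods. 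The polynomial bound on $f$ is handled by uniform integrability, since $|B_R(v)|$ has all moments bounded uniformly in $n$ (it is dominated by a binomial branching process with mean $\leq d$).

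The difficulty is that membership in $G^{(2)}$ is not an $R$-local property. I would replace it by the local approximation
\[
    C_L(v)=\{v \text{ has at least two neighbors } w \text{ with a path from } w \text{ of length } L \text{ avoiding } v\},
\]
and likewise mark each vertex $u\in B_R(v)$ by $C_L(u)$. The main step is to show that $\frac1n\#\{v: \mathbf 1_{C_L(v)}\neq \mathbf 1\{v\in G^{(2)}\}\}\to 0$ in probability as first $n\to\infty$, then $L\to\infty$. One inclusion is deterministic: a $2$-core vertex of a graph with no short cycles near it satisfies $C_L$, and vertices with a cycle within distance $L$ number $\oo(n)$ w.h.p. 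For the other direction, the count of vertices satisfying $C_L$ is local, so it converges to $\bP(C_L(o)\text{ in }T)$, which decreases to $\bP(o\in T^{(2)})$ as $L\to\infty$. It therefore suffices to show the lower bound $|V(G^{(2)})|/n\geq \bP(o\in T^{(2)})-o(1)$. This is the known size of the $2$-core of the (inhomogeneous) random graph, from the peeling or branching-process analysis in \cite{BJR07,Rio08}. I expect this step to be the main obstacle, since it requires a genuinely global argument: long paths in the giant component must actually close into cycles.

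Given this, marks on $B_R(v)$ computed via $C_{L}$ agree with true core marks for all but $\oo(n)$ vertices $v$. A union bound handles the marks inside the neighborhood: vertices $v$ whose $B_R(v)$ contains a misclassified vertex number at most the sum over misclassified $u$ of $|B_R(u)|$, which is $\oo(n)$ by Cauchy--Schwarz and uniform moment bounds. Applying the local convergence to the $(R+L)$-local function gives, after letting $L\to\infty$ with dominated convergence on the tree side,
\[
    \frac1n\sum_{v\in G^{(2)}}f(v,G)\overset{\bP}{\longrightarrow}\bE[f(o,T)\mathbf 1\{o\in T^{(2)}\}].
\]
Taking $f\equiv1$ gives $|V(G^{(2)})|/n\to\bP(o\in T^{(2)})=\bP(\xi_2\geq 2)>0$, using $d>1$ and \Cref{lem:core-geometry}. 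Dividing the two limits yields \eqref{eq:local-average}. The polynomial growth of $f$ again needs the uniform integrability bound on the misclassified set, via Hölder.
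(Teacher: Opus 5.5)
Your argument is correct, but it takes a genuinely different route. The paper gives no argument of its own and simply imports the statement from \cite[Lemma~11.11]{BJR07} (with \cite{Rio08}). You instead reduce the core-marked statement to two inputs: unmarked local weak convergence of the SBM (denominator $n$, polynomially bounded local functions), and the single scalar bound $|V(G^{(2)})|/n\geq\bP(o\in T^{(2)})-\oo(1)$.

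The bridge through the $(L+1)$-local proxy $C_L$ is sound. Core vertices with tree-like $B_{L+1}(v)$ satisfy $C_L$ deterministically. The scalar bound squeezes the reverse misclassification density to at most $\bP(C_L(o))-\bP(o\in T^{(2)})+\oo(1)$. Cauchy--Schwarz, together with uniform moment bounds on $|B_R(v)|$, then transfers this to the marks inside $B_R(v)$ and absorbs the polynomial growth of $f$.

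Two bookkeeping points. For fixed $L$ the misclassified set only has size $\delta_L n$ with $\delta_L\to0$, so everything must be read in the iterated limit $n\to\infty$, then $L\to\infty$, as you set it up. Also, the core density is $\bP(\Pois(\eta)\geq2)$, not $\bP(\xi_2\geq2)$, since $\xi_2$ is already conditioned to be at least $2$.

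What your route buys is transparency. It shows that marking by $G^{(2)}$ is asymptotically local once the core density is known. Its $n$-normalized form also yields $|V(G^{(2)})|/n\to\bP(\Pois(\eta)\geq2)$, which the paper uses without comment in \Cref{lem:extension-norm}. What it does not buy is self-containment in the hard part. The lower bound on the core density is exactly the global input you identified (long branches must actually close into cycles), and it is supplied by the cited works. So your proof, like the paper's, ultimately rests on \cite{BJR07,Rio08} at that step.
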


\section{Test vectors and the limiting quadratic form}

We first set up the Markov random field. As $Q=P\Pi$, we can choose eigenvectors $\phi_1,\ldots,\phi_{r_+}$ of $Q$ corresponding to
$\mu_1^+,\ldots,\mu_{r_+}^+$ so that
\begin{align}\label{eq:pi-orthonormal}
    \sum_{a=1}^r\pi_a\phi_i(a)\phi_j(a)=\delta_{ij}.
\end{align}
Fix any $i\in[r_+]$, and let  $\Gamma$ be a graph with labeling function $\sigma:V(\Gamma)\rightarrow [r]$. For vertex $u\in V(\Gamma)$, we set
\[
    \rho_i\deq\frac{\mu_i^+}{d},
    \qquad
    X_{u,i}\deq\phi_i(\sigma(u)).
\]

Further, for $u\in V(\Gamma)$, let
\[
    Z_k(u)\deq|\{y:\dist(u,y)=k\}|,
\]
and, for vertices $u,v\in V(\Gamma)$, let
\[
    Z_k(u\mid v)
    \deq |\{y:\dist(u,y)=k,\ \dist(v,y)=\dist(v,u)+k\}|.
\]

We can now define the negative direction of the Bethe--Hessian. Fix $\sqrt d<\mu_0<d$. For $L\geq0$, a graph $\Gamma$, and a vertex $o$ such that the radius-$L$ neighborhood
$B_L(o)$ is a tree, define the vector $\psi_{i,L}\in \bR^{|V(\Gamma)|}$ such that for $o\in V(\Gamma)$, 
\begin{align}\label{eq:test-vector}
\begin{split}
    \psi_{i,L}(o)
    &\deq
    Z_L(o)X_{o,i}\\
    &\quad+
    \sum_{\ell=1}^L
    \left(\frac d{\mu_0}\right)^{\ell-1}
    \sum_{\dist(o,x)=\ell}
    Z_{L-\ell}(x\mid o)
    \left(
        \rho_i^{-1}X_{x,i}-X_{p_o(x),i}
    \right),
\end{split}
\end{align}
where  $p_o(x)$ denotes the predecessor of $x$ on the shortest path from $o$ to $x$. 
We set $\psi_{i,L}(o)\deq0$ whenever $B_L(o)$ is not
a tree. 

The increment    $\rho_i^{-1}X_{x,i}-X_{p_o(x),i}$
is orthogonal to the labels revealed before reaching $x$. The
coefficients in \eqref{eq:test-vector} are chosen so that the
contributions from consecutive levels will cancel in the expected
Bethe--Hessian quadratic form.

The first graph we consider is $T^{(2)}$, which inherits the labeling function from $T$. The next result gives the covariance of the Markov random fields on $T^{(2)}$.

\begin{lemma}\label{lem:innovation-orthogonality}
Conditional on the unlabelled tree, for fixed $i$,
\[
    \bE[X_{u,i}X_{v,i}\mid T^{(2)}]=\rho_i^{\dist(u,v)},
    \qquad
    \ \bE[X_{u,i}X_{v,j}\mid T^{(2)}]=0\quad(i\neq j),
\]
and the variables $X_{o,i}$ and
\[
    \rho_i^{-1}X_{x,i}-X_{p_o(x),i},
    \qquad x\neq o
\]
are pairwise orthogonal in expectation. 
\end{lemma}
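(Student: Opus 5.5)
By \Cref{lem:core-law}, conditional on the unlabelled tree $T^{(2)}$, the labels form a tree-indexed Markov chain. The root has law $\pi$, and each edge, read away from the root, uses the transition matrix $M\deq Q/d$. The plan is to compute all covariances in this Markov chain, using two facts.
\begin{itemize}
\item[(i)] The measure $\pi$ is stationary and reversible for $M$. Indeed, $\pi_a M_{ab}=\pi_a P_{ab}\pi_b/d$ is symmetric, and $\sum_a \pi_a Q_{ab}/d=\pi_b\sum_a P_{ba}\pi_a/d=\pi_b$ by \eqref{eq:const-deg}. Consequently, every vertex has marginal type law $\pi$, and the chain read along any path, in either direction, is again the stationary chain with kernel $M$.
\item[(ii)] Since $Q\phi_i=\mu_i^+\phi_i$, we have $M\phi_i=\rho_i\phi_i$, that is, $\bE[X_{x,i}\mid \sigma(p(x))]=\rho_i X_{p(x),i}$ along any edge.
\end{itemize}

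For the first identity, take $u,v$ and the unique path $u=w_0,\dots,w_k=v$. By (i) we may view the labels along this path as a stationary Markov chain started at $u$. Conditioning successively and applying (ii) gives $\bE[X_{v,i}\mid\sigma(u)]=\rho_i^{k}X_{u,i}$. Hence
\[
\bE[X_{u,i}X_{v,j}]=\rho_j^k\sum_a\pi_a\phi_i(a)\phi_j(a)=\rho_j^k\delta_{ij}
\]
by \eqref{eq:pi-orthonormal}. This proves both covariance claims at once. The only point needing care is that the path may go up toward the original root and back down. Reversibility (i) handles this, since conditioning on $\sigma(u)$ the chain along the path has kernel $M$ in every step.

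For the orthogonality statement, set $Y_x\deq\rho_i^{-1}X_{x,i}-X_{p_o(x),i}$ for $x\ne o$, where the tree is now re-rooted at $o$. First, $\bE[Y_x X_{o,i}]=\rho_i^{-1}\rho_i^{\ell}-\rho_i^{\ell-1}=0$ for $\dist(o,x)=\ell$. For two distinct vertices $x\ne y$, suppose without loss of generality that $y$ is not a strict descendant of $x$ in the tree rooted at $o$. In that case, conditionally on the labels of all vertices not in the subtree of $x$ (which include $p_o(x)$ and everything determining $Y_y$ except possibly $\sigma(x)$ if $x=p_o(y)$...), we need a careful choice. The cleaner route is direct computation. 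Write $m=\dist(x,y)$ and use the first identity for the four products $X_{x,i}X_{y,i}$, $X_{x,i}X_{p_o(y),i}$, $X_{p_o(x),i}X_{y,i}$ and $X_{p_o(x),i}X_{p_o(y),i}$. There are two cases.

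\textbf{Case 1: neither of $x,y$ is an ancestor of the other.} Each parent is one step closer to the other vertex. The four distances are $m$, $m-1$, $m-1$ and $m-2$, so
\[
\bE[Y_xY_y]=\rho_i^{m-2}-2\rho_i^{m-2}+\rho_i^{m-2}=0.
\]

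\textbf{Case 2: $x$ is a strict ancestor of $y$.} The four distances are $m$, $m-1$, $m+1$ and $m$, so
\[
\bE[Y_xY_y]=\rho_i^{m-2}-\rho_i^{m-2}-\rho_i^{m}+\rho_i^{m}=0.
\]

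The main obstacle is only bookkeeping: making sure that re-rooting at an arbitrary $o\in T^{(2)}$ (rather than the original root) does not break the Markov structure. Fact (i) settles this, and the same argument applies equally to finite locally tree-like neighborhoods.
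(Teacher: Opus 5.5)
Your proof is correct and follows essentially the paper's route. You derive $\bE[X_{u,i}X_{v,i}\mid T^{(2)}]=\rho_i^{\dist(u,v)}$ from the tree-indexed Markov chain with kernel $Q/d$ and stationary law $\pi$, and then check that $\rho_i^{\dist(x,y)-2}-\rho_i^{\dist(x,p_o(y))-1}-\rho_i^{\dist(p_o(x),y)-1}+\rho_i^{\dist(p_o(x),p_o(y))}$ vanishes. Your version is somewhat more self-contained in three places:
\begin{itemize}
\item The identity $\bE[X_{v,j}\mid\sigma(u)]=\rho_j^{\dist(u,v)}X_{u,j}$, combined with $\pi$-orthonormality, gives the $i\neq j$ case directly, where the paper cites \cite[Lemma~27]{bordenave2018nonbacktracking}.
\item Reversibility justifies paths that pass above the original root, which the paper uses implicitly.
\item Your explicit ancestor/non-ancestor case analysis and the computation against $X_{o,i}$ spell out what the paper leaves as easy to check.
\end{itemize}
The half-finished conditioning sentence before the direct computation can simply be deleted.
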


\begin{proof}
By \cite[Lemma 27]{bordenave2018nonbacktracking}, we know that $\bE[X_{u,i}X_{v,j}\mid T^{(2)}]=0$ for all $i\neq j$. Also  for all $u\sim v$ in $T^{(2)}$, we have 
\[
\bP(\sigma(u)=b\mid \sigma(v)=a,\, T^{(2)})=\frac{Q_{ab}}{d}.
\]
Thus, for all $u,v$ with $\dist(u,v)=\ell$, we have 
    \begin{align*}
        \bE[X_{u,i}X_{v,i}\mid  T^{(2)}]&=\sum_{a_0,a_\ell\in[r]}\phi_i(a_0)\phi_i(a_\ell)\sum_{a_1,\dots,a_{\ell-1}}\pi_{a_0}\left(\frac{Q_{a_0a_1}}{d}\right)\cdots \left(\frac{Q_{a_{\ell-1}a_\ell}}{d}\right)\\
        &=d^{-\ell}\sum_{a_0\in[r]}\pi_{a_0}\phi_i(a_0)\sum_{a_\ell\in[r]} (Q^\ell)_{a_0a_\ell}\phi_i(a_\ell)=d^{-\ell}\sum_{a_0\in[r]}\pi_{a_0}\phi_i(a_0) (Q^\ell \phi_i)(a_0)\\
        &=d^{-\ell}\sum_{a_0\in[r]}\pi_{a_0}\phi_i(a_0)\cdot  (\mu_i^+)^\ell \phi_i(a_0)=\left(\frac{\mu_i^+}{d}\right)^\ell\sum_{a_0\in[r]}\pi_{a_0}\phi_i(a_0)^2=\left(\frac{\mu_i^+}{d}\right)^\ell=\rho_i^\ell.
    \end{align*}

Now fix $x\neq x'$ such that none of $x,x'$ is $o$. The above gives
\begin{align*}
    &\bE ((\rho_i^{-1}X_{x,i}-X_{p_o(x),i})(\rho_i^{-1}X_{x',i}-X_{p_o(x'),i})\mid  T^{(2)})\\
    &=\rho_i^{\dist(x,x')-2}-\rho_i^{\dist(p_o(x),x')-1}-\rho_i^{\dist(x,p_o(x'))-1}+\rho_i^{\dist(p_o(x),p_o(x'))}.
\end{align*}
It is easy to check that, regardless of whether $o$ lies between $x$ and $x'$, the above is always 0. Thus, the variables  $(\rho_i^{-1}X_{x,i}-X_{p_o(x),i})_{x\neq 0}$ are pairwise orthogonal in expectation. Extending the same argument to $X_{o,i}$ shows that all variables in $\{X_{o,i}\}\cup\{\rho_i^{-1}X_{x,i}-X_{p_o(x),i}:x\neq o\}$ are pairwise orthogonal in expectation.
 \end{proof}

To show that the test vectors give negative directions, we decompose
their quadratic forms into local contributions. For $t>0$, define
\begin{align}\label{eq:local-U}
\begin{split}
    U_{i,t,L}(o)
    &\deq 
    -\frac1t\psi_{i,L}(o)[H(t)\psi_{i,L}](o)\\
    &=
    \psi_{i,L}(o)\sum_{x\sim o}\psi_{i,L}(x)
    -
    \left(\frac{d_o-1}{t}+t\right)\psi_{i,L}(o)^2.
\end{split}
\end{align}
This definition is chosen so that, on every finite graph,
\begin{align}\label{eq:local-to-global-U}
    -\frac1t
    \psi_{i,L}^{\top}H(t)\psi_{i,L}
    =
    \sum_o U_{i,t,L}(o).
\end{align}
Thus a positive lower bound on $\bE U_{i,t,L}(o)$ will yield a
negative direction for the Bethe--Hessian.

\begin{proposition}\label{prop:tree-negative}
There are constants $\mu_0>\sqrt d$, $\epsilon_0>0$, $c>0$, and an
integer $L$, depending only on the fixed SBM parameters, such that,
for every $i\in[r_+]$ and every
\[
    \sqrt d\leq t\leq\sqrt d+\epsilon_0,
\]
we have that on $T^{(2)}$,
\begin{align}\label{eq:tree-negative}
    \bE U_{i,t,L}(o)\geq c.
\end{align}
\end{proposition}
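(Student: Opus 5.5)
We plan to compute $\bE U_{i,t,L}(o)$ exactly, by first conditioning on the unlabelled geometry of $T^{(2)}$ and then averaging over it. On $T^{(2)}$ every radius-$L$ ball is a tree, so $\psi_{i,L}$ is given by \eqref{eq:test-vector} at every vertex. We write $\psi_{i,L}(o)$ and $\psi_{i,L}(x)$, for $x\sim o$, in the orthogonal system of \Cref{lem:innovation-orthogonality}, namely $X_{o,i}$ together with the innovations $\Delta_x\deq\rho_i^{-1}X_{x,i}-X_{p_o(x),i}$, all taken with root $o$.

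The first step is to re-express $\psi_{i,L}(x)$ for a neighbor $x$. It is defined with root $x$, so we must convert its innovations. For $y\neq o$ lying on the far side of $o$ from $x$, the predecessor of $y$ is the same from $x$ as from $o$, so the innovation is unchanged. The same holds for $y$ on the $x$-side with $y\neq x$. Only the pair $\{o,x\}$ changes: $X_{x,i}=\rho_i(\Delta_x+X_{o,i})$ and $\rho_i^{-1}X_{o,i}-X_{x,i}$ is an explicit combination of $X_{o,i}$ and $\Delta_x$. This is the ``isotropy'' the introduction alludes to. Conditional on the geometry, the products $\psi(o)\psi(x)$ and $\psi(o)^2$ then have expectations $\sum_y c_y c'_y\,\bE[\Delta_y^2\mid T^{(2)}] + (\text{root term})$. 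Here $\bE\Delta_y^2=\rho_i^{-2}-1$ and $\bE X_{o,i}^2=1$, and the coefficients are products of the branch counts $Z_{L-\ell}(y\mid o)$, $Z_{L-\ell\pm1}(y\mid x)$ and powers of $d/\mu_0$.

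The second step is to take expectations over the geometry. We use \Cref{lem:core-geometry}: the root degree is $\xi_2$, and each branch is a Galton--Watson tree with offspring law \eqref{eq:forward-offspring} of mean $d$. The required second moments $\bE Z_{k}^2$ and $\bE Z_{k-1}Z_k$ come from \eqref{eq:generation-identities}. The degree factor $(d_o-1)/t$ multiplies $\psi(o)^2$, so we also need mixed moments involving $\xi_2$ and $\xi_1$. These are explicit Poisson quantities, and the stationarity of the core (size-biasing of neighbor degrees) handles the $d_x$ terms through symmetry. The outcome is an expression of the form $\sum_{\ell\le L}(\text{level-}\ell\text{ term})$.

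The $(d/\mu_0)^{\ell-1}$ weights should make the consecutive-level contributions telescope, leaving a boundary term. Using $Z_k\approx d^kW$, that boundary term should be proportional to
\[
d^{2L}\,\bE W^2\Big[\rho_i(\cdot)-\tfrac{1}{t}(\cdot)-t\Big],
\]
and at $t=\sqrt d$ it reduces to something like $(\mu_i^+-\sqrt d)^2/\sqrt d$ times a positive constant. This is the scalar inequality changing sign exactly at $\mu_i^2=d$.

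The main obstacle is controlling the non-telescoping remainders. These come from the innovation variances weighted by $(d/\mu_0)^{2(\ell-1)}$ against $d^{\ell}$ growth, and we need to choose $\mu_0\in(\sqrt d,\mu_i^+)$ so that the geometric series $\sum_\ell (d/\mu_0^2)^{\ell}$ converges and the remainder is smaller than the main term. Here $\bE W^2<2$ from \eqref{eq:W-moments} should be what beats the variance of degrees near $d=1$, and we would first try to verify the inequality in the limit $L\to\infty$ with $\mu_0=\sqrt{d\,\mu_i^+}$. Once $\bE U_{i,\sqrt d,L}(o)\ge 2c$ holds for one large $L$, with $\mu_0$ chosen uniformly over the finitely many $i$, continuity in $t$ gives \eqref{eq:tree-negative} for $t\le\sqrt d+\epsilon_0$. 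This works because $U$ is a polynomial in $t$, $1/t$ with coefficients of finite expectation (all moments of $Z_L$ are finite).
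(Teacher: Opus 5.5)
Your setup agrees with the paper's. You expand $\psi_{i,L}(o)$ and $\psi_{i,L}(x)$ in the innovation basis, note that re-rooting across $\{o,x\}$ changes only the innovation on that edge, and average with \eqref{eq:generation-identities}. This is exactly how \eqref{eq:test-vector-norm} and \eqref{eq:conditional-adjacency} arise, and your closing continuity argument in $t$ is fine.

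The gap is the step you leave as ``should telescope.'' Write $S_\ell$ for the level-$\ell$ part of $\bE[\psi_{i,L}(o)^2\mid T^{(2)}]$. For $2\le\ell\le L-1$, the adjacency sums contribute $\mu_0\bE S_\ell+\mu_0^{-1}\bE[(d_o-1)S_\ell]$ in expectation, up to a variance error. The diagonal removes $t\,\bE S_\ell+t^{-1}\bE[(d_o-1)S_\ell]$. So the cancellation is exact only at $t=\mu_0$. Otherwise each level leaves a term of order $(\mu_0-t)(d/\mu_0^2)^{\ell}d^{2L}$, and summing the convergent series gives a contribution of the \emph{same} order $d^{2L}$ as the signal. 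Convergence of $\sum_\ell(d/\mu_0^2)^\ell$ therefore does not make this a remainder; it only disposes of the variance and level-$L$ errors.

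Doing the computation exactly (using $\bE[d_o(d_o-1)]=d\,\bE d_o$) at $t=\sqrt d$ and $L\to\infty$, normalized by $\bE d_o\,d^{2L-2}$ and with $a=\mu_i^+/\sqrt d$, the limit of $\bE U_{i,\sqrt d,L}(o)$ is
\[
\frac{\sqrt d}{a^2}\left\{(a-1)\bigl[a(da-1)-d(a-1)\bE W^2\bigr]-\frac{\mu_0-\sqrt d}{\mu_0+\sqrt d}\,(d-a^2)\,\bE W^2\right\}.
\]
The first term is the edge limit \eqref{eq:edge-limit}; the second collects all the $\mu_0$-mismatches. So the signal is linear in $a-1$, whereas your $(\mu_i^+-\sqrt d)^2$ could not change sign at the threshold. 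Also, $\mu_0$ enters at leading order.

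Consequently your choice $\mu_0=\sqrt{d\mu_i^+}$ fails. Since $\mu_i^+\le d$, it is at least $\mu_i^+$, which puts it outside your own interval, and it is at least $d^{3/4}$. For parameters with $\mu_i^+$ close to $\sqrt d$, the first term in braces is nearly $0$. The second is at most $-\frac{d^{1/4}-1}{d^{1/4}+1}(d-a^2)\bE W^2$, which stays bounded away from $0$. Hence $\bE U_{i,\sqrt d,L}(o)<0$ for all large $L$; for instance $d=3/2$, $a=1.01$ already gives a negative value.

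What is missing is the paper's handling of the mismatch:
\begin{enumerate}
\item The edge term $g_{t,L}$ does not involve $\mu_0$, so its positive limit $\kappa$ can be fixed first. This is where $\bE W^2<2$ and $a\le\sqrt d$ are used.
\item The level-$\ell\ge2$ mismatches equal $(\mu_0-t)\,\bE d_o\,(1-\frac{d}{t\mu_0})\,m_\ell$, where $m_\ell$ is the mean contribution of one root branch to $S_\ell$. These are nonnegative since $t<\mu_0$ and $t\mu_0>d$ whenever $\sqrt d\le t<\mu_0$.
\item The only unfavorable piece, $-\frac{\mu_0-t}{t\mu_0}\bE[(d_o-1)S_1]$, is $O((\mu_0-t)d^{2L})$.
\end{enumerate}
One therefore takes $\mu_0$ \emph{close} to $\sqrt d$, which is the opposite of moving it away to speed up convergence. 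Then one takes $\epsilon_0<\mu_0-\sqrt d$, and only then $L$ large enough that $(d/\mu_0^2)^L$ is negligible. Alternatively, from the exact formula above one can check, using $\bE W^2<2$, that any $\mu_0\in(\sqrt d,\mu_i^+]$ works. Either way, a quantitative argument of this kind is needed, and your proposal does not supply it.
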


\begin{proof}
Fix $i$, and write $\rho=\rho_i$ and
$\psi_L=\psi_{i,L}$.

The proof has four steps. We first expand the norm and adjacency terms in the orthogonal innovation basis. We then cancel the interior generations against the diagonal Bethe--Hessian terms. Then, we show the remaining bulk discrepancy is nonnegative except for a controllable first-level contribution. Finally, we show that the contribution of an edge is strictly positive precisely above the Kesten–Stigum threshold.

We first compute the two conditional expectations appearing in
\eqref{eq:local-U}. By
\Cref{lem:innovation-orthogonality}, the summands in
\eqref{eq:test-vector} are pairwise orthogonal in expectation conditional on  the
unlabelled tree. Therefore,
\begin{align}\label{eq:test-vector-norm}
\begin{split}
    \bE\left[
        \psi_L(o)^2
        \mathrel{\big|}T^{(2)}
    \right]
    &=
    Z_L(o)^2\\
    &\quad+
    (\rho^{-2}-1)
    \sum_{\ell=1}^L
    \left(\frac d{\mu_0}\right)^{2\ell-2}
    \sum_{\dist(o,x)=\ell}
        Z_{L-\ell}(x\mid o)^2.
\end{split}
\end{align}
Here, the term $\rho^{-2}-1$ comes from the fact that 
\[
\bE ((\rho^{-1}X_{x,i}-X_{p_o(x),i})^2\mid T^{(2)})=\rho^{-2}-2+1=\rho^{-2}-1.
\]
Since $L$ is fixed, the right-hand side of \eqref{eq:test-vector-norm} has finite expectation. It is
also strictly positive, since $Z_L(o)>0$ on $ T^{(2)}$.

For the adjacency term, we once again track coefficients in the orthogonal basis $(\rho_i^{-1}X_{x,i}-X_{p_o(x),i})$. Again applying \Cref{lem:innovation-orthogonality}, surviving terms can be grouped as
\begin{align}\label{eq:conditional-adjacency}
\begin{split}
    &\bE\left[
        \psi_L(o)\sum_{x\sim o}\psi_L(x)
        \mathrel{\big|}T^{(2)}
    \right]\\
    &=
    \rho\sum_{x\sim o}Z_L(o)Z_L(x)\\
    &\quad+
    (\rho^{-1}-\rho)
    \sum_{x\sim o}
    \Bigl(
        Z_L(o)Z_{L-1}(o\mid x)
        +
        Z_{L-1}(x\mid o)Z_L(x)\\
    &\hspace{47mm}
        -
        Z_{L-1}(x\mid o)Z_{L-1}(o\mid x)
    \Bigr)\\
    &\quad+
    (\rho^{-2}-1)
    \sum_{\ell=2}^{L}
    \left(\frac d{\mu_0}\right)^{2\ell-3}
    \sum_{\dist(o,x)=\ell}
        Z_{L-\ell+1}(x\mid o)
        Z_{L-\ell}(x\mid o)\\
    &\quad+
    (\rho^{-2}-1)
    \sum_{\ell=1}^{L-1}
    \left(\frac d{\mu_0}\right)^{2\ell-1}
    \sum_{\dist(o,x)=\ell}
        (d_o-1)
        Z_{L-\ell-1}(x\mid o)
        Z_{L-\ell}(x\mid o).
\end{split}
\end{align}
For each neighbor
$x$ of $o$, collect the terms involving the two branches obtained by
deleting the edge $\{o,x\}$ into
\begin{align}\label{eq:edge-contribution}
\begin{split}
    g_{t,L}(o,x)
    &\deq 
    \left(
        \rho Z_L(x)
        +
        (\rho^{-1}-\rho)Z_{L-1}(o\mid x)
        -
        tZ_{L-1}(x\mid o)\right.\\
    &\hspace{34mm}\left.
        -
        \frac{Z_L(o)-Z_{L-1}(x\mid o)}t
    \right)Z_L(o)\\
    &\quad+
    (\rho^{-1}-\rho)
    \left(
        Z_L(x)
        -
        Z_{L-1}(o\mid x)
        -
        \frac t\rho Z_{L-1}(x\mid o)
    \right)
    Z_{L-1}(x\mid o).
\end{split}
\end{align}
We will see that $\sum_{x\sim o}g_{t,L}(o,x)$ covers most of the  terms  in $\bE U_{i,t,L}(o)$ that involve $o$ and $x\sim o$. Before that, we make some preparation to 
control the remaining terms.

 For $1\leq\ell\leq L$, write
\[
    S_\ell(o)
    \deq 
    (\rho^{-2}-1)
    \left(\frac d{\mu_0}\right)^{2\ell-2}
    \sum_{\dist(o,x)=\ell}
        Z_{L-\ell}(x\mid o)^2.
\]
Then \eqref{eq:test-vector-norm} becomes
\[
    \bE\left[
        \psi_L(o)^2
        \mathrel{\big|}T^{(2)}
    \right]
    =
    Z_L(o)^2+\sum_{\ell=1}^L S_\ell(o).
\]

We first compare the two interior sums in
\eqref{eq:conditional-adjacency} with the corresponding terms in this
conditional second moment. Fix $2\leq\ell\leq L$. Conditional on the tree through level $\ell$, the
processes growing from the vertices at that level are independent
copies of the Galton--Watson process in
\Cref{lem:branching-limit}. Since
\[
    \bE(Z_{L-\ell+1}Z_{L-\ell})=d\,\bE Z_{L-\ell}^2,
\]
we obtain
\begin{align}\label{eq:first-interior-cancellation}
\begin{split}
    &(\rho^{-2}-1)
    \left(\frac d{\mu_0}\right)^{2\ell-3}
    \bE\sum_{\dist(o,x)=\ell}
        Z_{L-\ell+1}(x\mid o)
        Z_{L-\ell}(x\mid o)
    =
    \mu_0\,\bE S_\ell(o).
\end{split}
\end{align}

Similarly, for $1\leq\ell\leq L-1$, the second identity in
\eqref{eq:generation-identities} gives
\begin{align}\label{eq:second-interior-cancellation}
\begin{split}
    &\frac1{\mu_0}
    \bE\left[(d_o-1)S_\ell(o)\right]\\
    &\quad-
    (\rho^{-2}-1)
    \left(\frac d{\mu_0}\right)^{2\ell-1}
    \bE\sum_{\dist(o,x)=\ell}
        (d_o-1)
        Z_{L-\ell-1}(x\mid o)
        Z_{L-\ell}(x\mid o)\\
    &\qquad
    =
    \frac{\rho^{-2}-1}{\mu_0}
    \left(\frac d{\mu_0}\right)^{2\ell-2}
    \Var(\xi)d^{L-\ell-1}
    \bE\left[(d_o-1)Z_\ell(o)\right].
\end{split}
\end{align}
The right-hand side is nonnegative.

We will also use the following immediate consequence of the
root-degree law. If $d_o$ is the degree of the root in
$ T^{(2)}$, then we claim
\begin{align}\label{eq:core-degree-identity}
\bE[d_o(d_o-1)]
=
d\bE d_o.
\end{align}
Indeed, if $D\sim\Pois(\eta)$, then
\[
\bE[D(D-1)\mid D\geq2]
=
\frac{\eta^2}{\bP(D\geq2)}
\]
and
\[
\bE[D\mid D\geq2]
=
\frac{\eta(1-\re^{-\eta})}{\bP(D\geq2)}.
\]
The ratio of these two quantities is
\[
\frac{\eta}{1-\re^{-\eta}}
=
d.
\]

Conditional on $d_o$,
the $\ell$th generation is the union of $d_o$ independent forward
branches. Hence, by \eqref{eq:core-degree-identity},
\[
\begin{aligned}
    \bE\left[(d_o-1)Z_\ell(o)\right]
    &=
    \bE[d_o(d_o-1)]d^{\ell-1}=
    d^\ell\bE d_o.
\end{aligned}
\]
It follows that the sum over $\ell$ of the right-hand side of
\eqref{eq:second-interior-cancellation} is at most
\begin{align}\label{eq:interior-error}
    C d^{2L}
    \left(\frac d{\mu_0^2}\right)^L.
\end{align}

We now substitute \eqref{eq:test-vector-norm} and
\eqref{eq:conditional-adjacency} into \eqref{eq:local-U}. Since
\[
    \sum_{x\sim o}Z_{L-1}(x\mid o)=Z_L(o),
\]
we can sum $g_{t,L}(o,x)$ as defined in \eqref{eq:edge-contribution} over $x\sim o$. Thus, we obtain that the sum first two terms of \eqref{eq:conditional-adjacency} corresponding to the nearest neighbors is equal to
\[
\sum_{x\sim o}g_{t,L}(o,x)+\left(\left(\frac{d_o-1}{t}+t\right)Z_L(o)^2+tS_1(o)\right).
\]
Using \eqref{eq:first-interior-cancellation},
\eqref{eq:second-interior-cancellation}, and
\eqref{eq:interior-error}, we therefore obtain
\begin{align}\label{eq:quadratic-intermediate}
\begin{split}
    \bE U_{i,t,L}(o)
    &\geq
    \bE\sum_{x\sim o}g_{t,L}(o,x)\\
    &\quad+
    (\mu_0-t)
    \sum_{\ell=2}^{L-1}
    \bE\left[
        \left(
            1-\frac{d_o-1}{t\mu_0}
        \right)S_\ell(o)
    \right]\\
    &\quad-
    \frac{\mu_0-t}{t\mu_0}
    \bE\left[(d_o-1)S_1(o)\right]\\
    &\quad-
    C d^{2L}
    \left(\frac d{\mu_0^2}\right)^L.
\end{split}
\end{align}
Here the terms with $\ell=L$ have also been absorbed into the
last error. Indeed, since
\[
    S_L(o)
    =
    (\rho^{-2}-1)
    \left(\frac d{\mu_0}\right)^{2L-2}
    Z_L(o),
\]
we  know from the root-degree moment identities \eqref{eq:core-degree-identity} that 
\[
 \bE S_L(o)+   \bE\left[(d_o-1)S_L(o)\right]
    \leq
    C d^{2L}
    \left(\frac d{\mu_0^2}\right)^L.
\]

We next show that the second line of
\eqref{eq:quadratic-intermediate} is nonnegative. Conditional on
$d_o$, each $S_\ell(o)$ is the sum of $d_o$ independent nonnegative
contributions, one from each branch at the root. If $m_\ell$ is the
expected contribution of one branch, then
\begin{align*}
    &\bE\left[
        \left(
            1-\frac{d_o-1}{t\mu_0}
        \right)S_\ell(o)
    \right]\\
    &\qquad
    =
    \left(
        \bE d_o
        -
        \frac{\bE[d_o(d_o-1)]}{t\mu_0}
    \right)m_\ell\\
    &\qquad
    =
    \bE d_o
    \left(
        1-\frac d{t\mu_0}
    \right)m_\ell
    \geq0.
\end{align*}
Here we used \eqref{eq:core-degree-identity} and
$t\mu_0>d$. We will also choose $t<\mu_0$ so the factor of 
$\mu_0-t$ in \eqref{eq:quadratic-intermediate} is also nonnegative.

Finally,
\[
    S_1(o)
    =
    (\rho^{-2}-1)
    \sum_{x\sim o}Z_{L-1}(x\mid o)^2.
\]
Conditional on $d_o$, the variables $Z_{L-1}(x\mid o)$ are
independent copies of the $(L-1)$st generation size. Therefore,
\begin{align*}
    \bE\left[(d_o-1)S_1(o)\right]
    &=
    (\rho^{-2}-1)
    \bE[d_o(d_o-1)]
    \bE Z_{L-1}^2\leq
    Cd^{2L}.
\end{align*}
Dropping the nonnegative term in
\eqref{eq:quadratic-intermediate} now gives
\begin{align}\label{eq:quadratic-lower-bound}
    \bE U_{i,t,L}(o)
    \geq
    \bE\sum_{x\sim o}g_{t,L}(o,x)
    -
    C(\mu_0-t)d^{2L}
    -
    Cd^{2L}
    \left(\frac d{\mu_0^2}\right)^L.
\end{align}

It remains to show that the first term in
\eqref{eq:quadratic-lower-bound} outweighs the last two. Root the tree at a
uniformly chosen oriented core edge $(o,x)$. After deleting this edge,
the two resulting rooted trees are independent Galton--Watson trees
with offspring distribution \eqref{eq:forward-offspring}. Therefore,
for independent copies $W_o,W_x$ of the variable in
\Cref{lem:branching-limit},
\[
\begin{aligned}
    \frac{Z_{L-1}(o\mid x)}{d^{L-1}}
    &\longrightarrow W_o,
    &\qquad
    \frac{Z_{L-1}(x\mid o)}{d^{L-1}}
    &\longrightarrow W_x,\\
    \frac{Z_L(o)}{d^{L-1}}
    &\longrightarrow dW_o+W_x,
    &
    \frac{Z_L(x)}{d^{L-1}}
    &\longrightarrow W_o+dW_x
\end{aligned}
\]
in $L^2$. The last two limits follow by splitting each generation
according to which side of the deleted edge it lies on.

Write the signal eigenvalue relative to the Kesten--Stigum threshold
as
\[
    a\deq \frac{\mu_i^+}{\sqrt d}=\sqrt{d}\rho>1.
\]
Substituting the four preceding limits into
\eqref{eq:edge-contribution} at $t=\sqrt d$, and using the
independence of $W_o,W_x$ together with $\bE W=1$, gives
\begin{align}\label{eq:edge-limit}
    \lim_{L\to\infty}
    \frac{\bE g_{\sqrt d,L}(o,x)}{d^{2L-2}}
    =
    \frac{\sqrt d\,(a-1)}{a^2}
    \left[
        a(da-1)-d(a-1)\bE W^2
    \right].
\end{align}
By \eqref{eq:W-moments}, the expression in brackets is strictly
larger than
\[
    a(da-1)-2d(a-1)
    =
    d(a-1)^2+d-a.
\]
This is positive because
\[
    1<a\leq\sqrt d<d.
\]
Thus the limit in \eqref{eq:edge-limit} is strictly positive.

Returning from the oriented-edge law to the vertex-rooted law,
\[
\bE[\sum_{x\sim o}g_{t,L}(o,x)]=\bE[d_o]\bE[g_{t,L}(o,x)\mid o\sim x].
\]
The normalized expectation $\frac{1}{d^{2L-2}}\bE[\sum_{x\sim o}g_{t,L}(o,x)]$ is a linear combination of $1$, $t$, and
$t^{-1}$ whose coefficients converge as $L\to\infty$. The convergence
is therefore uniform for $t$ in a compact neighborhood of $\sqrt d$. Therefore, there are $\overline\epsilon>0$, $\kappa>0$, and $L_1$ such that
\[
    \bE\sum_{x\sim o}g_{t,L}(o,x)
    \geq
    \kappa d^{2L}
\]
whenever
$\sqrt d\leq t\leq\sqrt d+\overline\epsilon$ and $L\geq L_1$.

Going back to \eqref{eq:quadratic-lower-bound}, choose $\mu_0>\sqrt d$ sufficiently close to $\sqrt d$ that
\[
    C(\mu_0-\sqrt d)\leq\frac{\kappa}{4},
\]
and then choose
\[
    0<\epsilon_0
    <
    \min\{\overline\epsilon,\mu_0-\sqrt d\}.
\]
Finally, choose one fixed $L\geq L_1$ such that
\[
    C\left(\frac d{\mu_0^2}\right)^L
    \leq
    \frac{\kappa}{4}.
\]
It follows from \eqref{eq:quadratic-lower-bound} that, for
$\sqrt d\leq t\leq\sqrt d+\epsilon_0$,
\[
    \bE U_{i,t,L}(o)
    \geq
    \frac{\kappa}{2}d^{2L}.
\]

The set of informative eigenvalues is finite, so we may take the
minimum of the positive constants and the maximum of the required
values of $L$. This gives choices that work simultaneously for all
$i\in[r_+]$.
\end{proof}
The same orthogonality also removes all cross terms.

\begin{lemma}\label{lem:cross-terms}
For $i\neq j$, conditional on the unlabelled tree,
\[
    \bE[\psi_{i,L}(o)\psi_{j,L}(o)\mid T^{(2)}]=0
\]
and
\[
    \bE[
        \psi_{i,L}(o)[H(t)\psi_{j,L}](o)
        \mid T^{(2)}
    ]=0.
\]
\end{lemma}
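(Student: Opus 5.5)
The plan is to exploit bilinearity: both $\psi_{i,L}(o)\psi_{j,L}(o)$ and $\psi_{i,L}(o)[H(t)\psi_{j,L}](o)$ are finite sums of products $X_{u,i}X_{v,j}$ with coefficients depending only on the unlabelled tree. By \eqref{eq:test-vector}, $\psi_{i,L}(y)$ is a linear combination, with deterministic (given $T^{(2)}$) coefficients built from the $Z_k(\cdot\mid\cdot)$, of the variables $X_{u,i}$ for $u\in B_L(y)$. Likewise, $[H(t)\psi_{j,L}](o)=(t^2+d_o-1)\psi_{j,L}(o)-t\sum_{x\sim o}\psi_{j,L}(x)$ is a linear combination of the variables $X_{v,j}$ for $v\in B_{L+1}(o)$, and its coefficients are again measurable with respect to the unlabelled tree (they involve only $d_o$, $t$, and the counts $Z_k$). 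I will write both quantities explicitly in the form
\[
    \sum_{u,v}\alpha_{u}\beta_{v}X_{u,i}X_{v,j},
\]
where $\alpha_u$ and $\beta_v$ are measurable with respect to $T^{(2)}$. This uses that the unlabelled geometry of $T^{(2)}$ is independent of the labels (\Cref{lem:core-law}), so conditioning on the tree does not affect the label law.

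Taking conditional expectation given $T^{(2)}$ and pulling out the coefficients, each term becomes $\alpha_u\beta_v\,\bE[X_{u,i}X_{v,j}\mid T^{(2)}]$. By \Cref{lem:innovation-orthogonality}, this vanishes for every pair $u,v$ when $i\neq j$, including $u=v$. Hence both conditional expectations are zero. Integrability is not an issue: on the event that the relevant balls are trees, the sums are finite, and the $\phi$'s are bounded.

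The only point needing care is the convention $\psi_{i,L}(y)=0$ when $B_L(y)$ is not a tree. On $T^{(2)}$ every ball is a tree, so this convention never applies and the linear representation holds at $o$ and at each neighbour $x\sim o$. I also need to confirm that the cross-covariance identity of \Cref{lem:innovation-orthogonality} holds for arbitrary pairs $u,v$, not only neighbours. That identity reduces to $\sum_a\pi_a\phi_i(a)(Q^\ell\phi_j)(a)=(\mu_j^+)^\ell\delta_{ij}=0$ via \eqref{eq:pi-orthonormal}, which I expect to be the only substantive input. I anticipate no real obstacle beyond this bookkeeping.
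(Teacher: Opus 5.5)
Your proposal is correct and is essentially the paper's proof: expand $\psi_{i,L}(o)\psi_{j,L}(o)$ and $\psi_{i,L}(o)[H(t)\psi_{j,L}](o)=(t^2+d_o-1)\psi_{i,L}(o)\psi_{j,L}(o)-t\sum_{x\sim o}\psi_{i,L}(o)\psi_{j,L}(x)$ bilinearly into products $X_{u,i}X_{v,j}$ whose coefficients depend only on the unlabelled tree, and then kill each term using the cross-orthogonality in \Cref{lem:innovation-orthogonality}. Your extra check that $\bE[X_{u,i}X_{v,j}\mid T^{(2)}]=0$ for arbitrary $u,v$ fills in what that lemma already asserts. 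That check reduces to $\sum_a\pi_a\phi_i(a)(Q^\ell\phi_j)(a)=0$, using that $\pi$ is stationary and reversible for $Q/d$ by symmetry of $P$.
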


\begin{proof}
After expanding the test vectors, every summand is a product of a
variable in index $i$ and one in index $j$. Its
conditional expectation is zero by
\Cref{lem:innovation-orthogonality}.
\end{proof}

\section{Passing from the limiting tree to the full graph}

Fix $\mu_0$, $L$, and $\epsilon_0$ as in
\Cref{prop:tree-negative}. On $ G^{(2)}$, define the test vectors by
\eqref{eq:test-vector}, with all distances and generation sizes taken
in $ G^{(2)}$. As above, set $\psi_{i,L}(o)=0$ whenever the
neighborhood required by \eqref{eq:test-vector} is not a tree.

We first show that, given the properties of our test vectors, we can find $r_+$ negative eigenvalues. Therefore, take $\psi_{i,L}^\Gamma$ to be the vector defined in \eqref{eq:test-vector} with reference to graph $\Gamma$.
\begin{proposition}\label{prop:finite-core}
With high probability, the vectors $\{\psi_{i,L}^{G^{(2)}}\}_{i\in[r_+]}$ span an $r_+$ dimensional subspace. Moreover, there is a constant $c>0$ such that, for every $t=t_n$ satisfying
    $\sqrt d\leq t\leq\sqrt d+\epsilon_0$,
 every vector $\chi\neq 0$ in the span of $\{\psi_{i,L}^{G^{(2)}}\}_{i\in[r_+]}$ satisfies
\[
\frac{\chi^\top H(t,G^{(2)})\chi}{\chi^\top \chi}\leq- c.
\]

\end{proposition}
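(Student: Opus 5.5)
The plan is to reduce everything to an $r_+\times r_+$ Gram-matrix computation and then pass from $G^{(2)}$ to its local limit using \Cref{prop:local-averages}. For $i,j\in[r_+]$ and $t$ in the given range, set
\[
    M_{ij}\deq\frac1{|V(G^{(2)})|}\sum_{o\in V(G^{(2)})}\psi_{i,L}(o)\psi_{j,L}(o),
\]
\[
    K_{ij}(t)\deq\frac1{|V(G^{(2)})|}\,\psi_{i,L}^\top H(t,G^{(2)})\psi_{j,L}
    =\frac1{|V(G^{(2)})|}\sum_{o}\psi_{i,L}(o)[H(t)\psi_{j,L}](o).
\]
Writing $\chi=\sum_i c_i\psi_{i,L}$, we have $\chi^\top H\chi/\chi^\top\chi=c^\top K(t)c/c^\top Mc$. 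So it suffices to prove two facts with high probability. First, $M$ is close to a diagonal matrix with positive entries bounded below, which also gives linear independence and hence dimension $r_+$. Second, $K(t)$ is close to a diagonal matrix with entries at most $-c'$, uniformly in $t$.

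Both summands are local functions in the sense of \Cref{prop:local-averages}. The value $\psi_{i,L}(o)$ depends only on the labelled radius-$L$ ball of $o$ in $G^{(2)}$, including the indicator that the ball is a tree, so $[H(t)\psi_{j,L}](o)$ depends on the radius-$(L+1)$ ball. In the full graph, deciding membership in $G^{(2)}$ is not local, but the proposition allows marks recording core membership, so these balls are determined. Both functions are bounded by a polynomial in the ball size, since $|\psi|\le C|B_L|^{2}\max|\phi|$.

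By \Cref{prop:local-averages}, $M_{ij}$ and $K_{ij}(t)$ converge in probability to the corresponding expectations on $T^{(2)}$. In the limit tree the radius-$L$ ball is always a tree. By \Cref{lem:cross-terms}, the off-diagonal limits vanish. The diagonal limit of $M$ is $\bE\psi_{i,L}(o)^2\ge \bE Z_L(o)^2\ge1$ by \eqref{eq:test-vector-norm}, and it is finite. By \eqref{eq:local-U} and \Cref{prop:tree-negative}, the diagonal limit of $K$ is $-t\,\bE U_{i,t,L}(o)\le -\sqrt d\,c$.

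To make this uniform over $t_n$, I will not apply the convergence separately for each $t$. Instead I expand
\[
    K_{ij}(t)=t^2M_{ij}-tA_{ij}+(D-I)_{ij},
\]
where $A_{ij}$ and $(D-I)_{ij}$ are the normalized bilinear forms in $A_{G^{(2)}}$ and $D_{G^{(2)}}-I$. Each of these three $t$-independent coefficient matrices is itself a local average, so each converges in probability. Since $t$ ranges over a compact interval, the convergence of $K(t)$ is uniform in $t$.

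The conclusion then follows. With high probability, $M=\mathrm{diag}(m_i)+o(1)$ with $m_i\in[1,C]$, and $K(t)\preceq -\sqrt d\,c\,I+o(1)$ uniformly in $t$. Hence $c^\top K(t)c\le -(\sqrt d c/2)|c|^2$ and $c^\top Mc\le 2C|c|^2$, so the Rayleigh quotient is at most $-\sqrt d\,c/(4C)$, and $M$ is invertible.

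I expect the main obstacle to be the rigorous application of \Cref{prop:local-averages}. One issue is whether the prescribed normalization and the conditioning on $o\in T^{(2)}$ match the convention used in \Cref{prop:tree-negative}, where the expectation is over the rooted $2$-core law. I believe they do, since both condition on the root lying in the core. The other issue is checking that the polynomial-growth hypothesis holds for products such as $\psi_{i,L}(o)\sum_{x\sim o}\psi_{j,L}(x)$. This is a bound of the form $C|B_{L+1}|^{5}$, so it is fine. Both are bookkeeping rather than conceptual difficulties.
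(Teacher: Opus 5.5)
Your proposal is correct and follows essentially the same route as the paper. Both arguments let the normalized Gram and quadratic-form matrices converge via \Cref{prop:local-averages}, use \Cref{lem:cross-terms} to remove the off-diagonal limits, and control the diagonal limits by \eqref{eq:test-vector-norm} and \Cref{prop:tree-negative}; your decomposition $K(t)=t^2M-tA+(D-I)$ spells out the paper's remark that the $t$-dependence involves only finitely many local functions with bounded deterministic coefficients.
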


\begin{proof}
 By \Cref{prop:local-averages} 
and
\Cref{lem:cross-terms}, for every $i,j\in[r_+]$,
\[
    \frac{\langle\psi_{i,L}^{G^{(2)}},\psi_{j,L}^{G^{(2)}}\rangle}
         {|V( G^{(2)})|}
    =
    \begin{cases}
        \bE\psi_{i,L}^{T^{(2)}}(o)^2+\oo_{n}(1),&i=j,\\
        \oo_n(1),&i\neq j.
    \end{cases}
\]
Similarly, by \eqref{eq:local-to-global-U},
\Cref{prop:tree-negative}, and
\Cref{lem:cross-terms}, for $U_{i,t,L}$ again defined for $T^{(2)}$,
\[
    \frac{
        \langle \psi_{i,L}^{G^{(2)}},
        H(t, G^{(2)})\psi_{j,L}^{G^{(2)}}
    \rangle}{|V( G^{(2)})|}
    =
    \begin{cases}
        -t\,\bE U_{i,t,L}(o)+\oo_n(1),&i=j,\\
        \oo_n(1),&i\neq j.
    \end{cases}
\]
These conclusions remain valid when $t=t_n$ varies in the stated
interval, since the dependence on $t$ involves only finitely many
local functions with uniformly bounded deterministic coefficients.

The limiting Gram matrix is positive definite, while by \Cref{prop:tree-negative}, the diagonal entries of the limiting quadratic-form
matrix are bounded above by
$-\sqrt d\,c$. Since $r_+$ is fixed, with probability
$1-\oo_n(1)$ the test vectors are linearly independent, and the Rayleigh quotient of
$H(t, G^{(2)})$ on their span is  bounded above by a negative
constant.

\end{proof}
We now give the harmonic extension of a vector from the 2-core to the entire graph. 
\begin{definition}\label{def:harmonic-extension}
Let $G$ be a finite graph, let $t\neq0$, and let
$f\in\RR^{V(G^{(2)})}$. Its $t$-harmonic extension
$\widetilde f\in\RR^{V(G)}$ is defined by setting
\[
    \widetilde f=f
    \quad\text{on }G^{(2)},
\]
setting $\widetilde f=0$ on every component with empty $2$-core, and
recursively setting
\[
    \widetilde f(x)=t^{-1}\widetilde f(p_u(x))
\]
along every tree attached to $G^{(2)}$, where $u$ denotes the root of the tree.
\end{definition}

\begin{lemma}\label{lem:harmonic-extension}
For every vector $f$ on $G^{(2)}$, its extension satisfies
\begin{align}\label{eq:harmonic-form-identity}
    \widetilde f^{\top}H(t,G)\widetilde f
    =f^{\top}H(t,G^{(2)})f.
\end{align}
\end{lemma}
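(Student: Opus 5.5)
We will prove \eqref{eq:harmonic-form-identity} by writing the quadratic form of the Bethe--Hessian as a sum over edges plus a diagonal term, and then peeling off pendant vertices one at a time. For any graph $\Gamma$ and vector $g$,
\[
    g^\top H(t,\Gamma)g
    =\sum_{v}(t^2-1)g(v)^2+\sum_{\{u,v\}\in E(\Gamma)}\bigl(g(u)^2+g(v)^2-2t\,g(u)g(v)\bigr),
\]
since each vertex $v$ receives $d_v g(v)^2$ from the edge terms. Equivalently,
\[
    g^\top H(t,\Gamma)g=\sum_v (t^2-1)g(v)^2+\sum_{\{u,v\}\in E}\bigl[(g(u)-t g(v))^2+(1-t^2)g(v)^2\bigr].
\]
This second form shows that an edge $\{u,v\}$ with $g(v)=t^{-1}g(u)$ contributes exactly $(1-t^2)g(v)^2$.

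\textbf{Trees attached to the core.} Consider a tree attached to $G^{(2)}$ at a core vertex $u$. Orient each of its edges $\{p(x),x\}$ away from $u$, and charge the edge to its child endpoint $x$. Every non-core vertex $x$ in such a tree is the child of exactly one edge. Its charged edge contributes $(\widetilde f(p(x))-t\widetilde f(x))^2+(1-t^2)\widetilde f(x)^2=(1-t^2)\widetilde f(x)^2$, because $\widetilde f(x)=t^{-1}\widetilde f(p(x))$. This cancels the diagonal term $(t^2-1)\widetilde f(x)^2$ of $x$ exactly. Hence the total contribution of all non-core vertices, together with all non-core edges, is zero. The root $u$ keeps its own diagonal term, and the core edges contribute exactly as in $H(t,G^{(2)})$.

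\textbf{Components with empty $2$-core.} These carry $\widetilde f=0$ and contribute nothing.

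\textbf{The core itself.} The remaining terms are precisely the diagonal terms over $V(G^{(2)})$ together with the core edges, evaluated on $f$. By the first displayed identity applied to $\Gamma=G^{(2)}$, this sum is $f^\top H(t,G^{(2)})f$.

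The one point requiring care is the bookkeeping of degrees. In $H(t,G)$ a core vertex $u$ has degree $d_u^G$, which counts its pendant edges. Those extra degree contributions are not lost, however: in the second form they are absorbed into the pendant edge terms that have already been accounted for. Writing the form as a sum of edge terms plus $(t^2-1)\sum_v g(v)^2$ therefore avoids any mismatch between $D_G$ and $D_{G^{(2)}}$.

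We also need the decomposition of $G$ to be well defined: every vertex outside $G^{(2)}$ lies either in a tree meeting $G^{(2)}$ at a single vertex, or in a component with empty core. This is the standard fact that the $2$-core is obtained by iteratively deleting leaves. It follows that each non-core vertex has a unique parent toward the core, so $\widetilde f$ in \Cref{def:harmonic-extension} is well defined.
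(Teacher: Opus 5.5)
Your proof is correct and is essentially the paper's argument. The paper removes pendant leaves one at a time and observes that each removal changes the form by $\bigl(\widetilde f(y)-t\widetilde f(x)\bigr)^2=0$. Your edge-plus-diagonal decomposition assigns exactly this quantity to each pendant vertex $x$ (its diagonal term plus its parent edge, which includes the extra degree contribution at the parent), just all at once instead of iteratively.
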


\begin{proof}
Remove the vertices outside the $2$-core one leaf at a time. If $x$ is
a leaf with parent $y$, the change in the quadratic form when $x$ is
removed is
\[
    \widetilde f(y)^2+t^2\widetilde f(x)^2
    -2t\widetilde f(x)\widetilde f(y)
    =\bigl(\widetilde f(y)-t\widetilde f(x)\bigr)^2=0.
\]
Repeating this removes every
pendant tree and proves \eqref{eq:harmonic-form-identity}.
\end{proof}

We now will show that the norm does not drastically change when we pass to this extension. For $v\in V( G^{(2)})$, let $T_v$ be the rooted tree consisting of
$v$ and all vertices outside $ G^{(2)}$ whose unique path to $ G^{(2)}$ 
meets the core at $v$. Let $|T_v|$ denote the vertex number of $T_v$.
\begin{lemma}\label{lem:pendant-tree-second-moment}
There is a constant $C>0$ such that, with probability
$1-\oo_n(1)$,
\[
    \sum_{v\in V( G^{(2)})}|T_v|^2\leq Cn.
\]
\end{lemma}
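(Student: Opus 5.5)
We plan to reduce the sum $\sum_{v}|T_v|^2$ to a local average via \Cref{prop:local-averages}, after a truncation that handles the unbounded function $|T_v|^2$. The trees $T_v$ are local objects when they are small, but $|T_v|^2$ is not a function of a bounded-radius neighborhood. We therefore split according to the depth of $T_v$. Fix a large radius $R$ and write
\[
    |T_v|^2=|T_v|^2\mathbf 1\{\mathrm{depth}(T_v)<R\}+|T_v|^2\mathbf 1\{\mathrm{depth}(T_v)\geq R\}.
\]
The first term depends only on the radius-$R$ neighborhood of $v$ with core membership marked. It is bounded by the square of the number of vertices in that ball, so it is polynomially bounded. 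By \Cref{prop:local-averages}, its average over $V(G^{(2)})$ converges in probability to
\[
    \bE\bigl[|T_o|^2\mathbf 1\{\mathrm{depth}<R\}\,\big|\,o\in T^{(2)}\bigr]\leq \bE\bigl[|T_o|^2\,\big|\,o\in T^{(2)}\bigr].
\]
By \Cref{lem:core-law}, $T_o$ in the limit is the root together with a $\Pois(dq)$ Galton--Watson tree hanging from it. Here $dq<1$ for $d>1$, since $q$ is the smallest fixed point of $q=\re^{d(q-1)}$. A subcritical Poisson Galton--Watson tree has total progeny with finite exponential moments, so this expectation is a finite constant $C_1$. Since $|V(G^{(2)})|\leq n$, the first part contributes at most $(C_1+1)n$ with high probability.

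The main obstacle is the tail term: we must show that
\[
    \sum_v |T_v|^2\mathbf 1\{\mathrm{depth}(T_v)\geq R\}\leq n
\]
with high probability for $R$ large but fixed. We would bound its expectation directly in $G$, by a first-moment computation over pendant trees, which avoids any local-convergence argument. Write the sum as $\sum_v\sum_{x,y\in T_v}\mathbf 1\{\mathrm{depth}(T_v)\geq R\}$. This is at most the number of triples $(v,x,y)$ such that $x,y$ lie in $T_v$ and $T_v$ contains a vertex at distance $R$ from $v$.

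To bound the count, we explore from $v$. The cluster of $v$ outside the core is stochastically dominated by a branching process with offspring $\mathrm{Bin}(n,d'/n)$ for suitable parameters. The conditioning on not being in the core is the delicate point. Rather than condition, we can use an alternative cruder bound. A pendant tree $T_v$ is a tree component of $G$ minus the edges of the core; more simply, $T_v$ minus $v$ is a union of components of $G\setminus G^{(2)}$, each a tree. We therefore bound $\sum_v|T_v|^2$ by combining two facts: with high probability every vertex has degree $O(\log n/\log\log n)$, and the expected number of tree components of $G\setminus G^{(2)}$ with $k$ vertices decays like $n\,k^{-5/2}(d' \re^{1-d'})^k$ with $d'=dq<1$, by the standard duality for supercritical Erdős--Rényi-type graphs. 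With equal expected degrees the SBM enjoys the same unlabelled statistics, so the same estimate holds there. Grouping components by the core vertex they attach to gives $|T_v|\leq 1+\sum(\text{sizes of attached components})$.

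If this direct duality argument proves hard to make rigorous, the fallback is a second-moment and concentration argument in the spirit of \cite{BJR07}. The quantity $\sum_v|T_v|^2\mathbf 1\{|T_v|\geq K\}$ has expectation at most $\epsilon_K n$ with $\epsilon_K\to0$, using the exponential tail of subcritical cluster sizes. Markov's inequality then gives the tail bound with high probability. Combining the truncated local average with this tail estimate yields $\sum_v|T_v|^2\leq Cn$ with probability $1-\oo_n(1)$.
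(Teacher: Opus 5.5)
Your fallback is essentially the paper's proof. You truncate, apply \Cref{prop:local-averages} to the bounded part (whose limit is at most $\bE|T_o|^2<\infty$ uniformly in the truncation level), bound the remainder in expectation by $\epsilon_K n$ with $\epsilon_K\to0$ via a uniform exponential tail for pendant-tree sizes, and let $K\to\infty$ after $n\to\infty$; Markov alone only gives probability $1-\epsilon_K$ for fixed $K$, and the paper likewise asserts this tail from the uniformly subcritical exploration without further detail. Drop the duality-plus-maximum-degree route, though: $(1+\sum_j s_j)^2\le(\Delta+1)(1+\sum_j s_j^2)$ with $\Delta=O(\log n/\log\log n)$ yields only $O(n\log n/\log\log n)$, and the $k^{-5/2}$ duality count applies to tree components of $G$, not to pendant trees (which are not components of $G$ and have prefactor $k^{-3/2}$).
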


\begin{proof}
By \Cref{lem:core-law}, the limiting pendant tree is a subcritical
Galton--Watson tree with offspring distribution $\Pois(dq)$, where
$dq<1$. The corresponding finite-graph exploration is uniformly
subcritical, and hence the sizes of the pendant trees have a uniform
exponential tail. In particular, for every fixed $k$,
\[
    \frac1{|V( G^{(2)})|}
    \sum_{v\in V( G^{(2)})}
        (|T_v|\wedge k)^2
    \overset{\bP}{\longrightarrow}
    \bE[(|T|\wedge k)^2],
\]
by \Cref{prop:local-averages}, while the exponential tail  $\bE[|T|^2]-\bE[(|T|\wedge k)^2]$ makes the
truncation error uniformly negligible as $k\to\infty$. Therefore
\[
    \frac1{|V( G^{(2)})|}
    \sum_{v\in V( G^{(2)})}|T_v|^2
    \overset{\bP}{\longrightarrow}
    \bE|T|^2<\infty.
\]
Since $|V( G^{(2)})|\leq n$, the result follows.
\end{proof}
We will now show that for the fixed-radius test vectors, the norm of the extension has a
direct limiting formula. From now on, we will write $\psi_{i,L}\deq \psi_{i,L}^{G^{(2)}}$ for notational convenience.

\begin{lemma}\label{lem:extension-norm}
Let $t=t_n$ be as in \Cref{prop:finite-core}, and let
$\widetilde\psi_{i,L}$ be the $t$-harmonic extension of
$\psi_{i,L}$ from $G^{(2)}$ to $G$. Then
\begin{align}\label{eq:extension-norm}
    \frac{\|\widetilde\psi_{i,L}\|_2^2}{|V( G^{(2)})|}
    -\frac{1}{1-dq/t^2}\cdot \frac{\|\psi_{i,L}\|^2}{|V( G^{(2)})|}
    \overset \bP\longrightarrow0.
\end{align}
In particular,  $\|\widetilde\psi_{i,L}\|_2^2$ is bounded above and below by positive
constant multiples of $|V( G^{(2)})|$, uniformly in $n$.
\end{lemma}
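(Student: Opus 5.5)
We decompose the squared norm of the extension according to the pendant trees. By \Cref{def:harmonic-extension}, a vertex $x\in T_v$ at depth $k$ below $v$ has $\widetilde\psi_{i,L}(x)=t^{-k}\psi_{i,L}(v)$. Vertices in components with empty $2$-core contribute zero. Hence
\[
    \|\widetilde\psi_{i,L}\|_2^2=\sum_{v\in V(G^{(2)})}\psi_{i,L}(v)^2\,F_t(T_v),
    \qquad
    F_t(T_v)\deq\sum_{k\geq0}t^{-2k}N_k(T_v),
\]
where $N_k(T_v)$ is the number of vertices of $T_v$ at depth $k$. The goal is then to show that $\psi_{i,L}(v)^2F_t(T_v)$ averages, over the core, to $\frac{1}{1-dq/t^2}$ times the average of $\psi_{i,L}(v)^2$. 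Heuristically this holds for two reasons. By \Cref{lem:core-law}, in the local limit the pendant tree at $o$ is independent of the labelled core, in particular of $\psi_{i,L}(o)$. Its depth-$k$ profile is that of a $\Pois(dq)$ Galton--Watson tree attached at $o$ by $\Pois(dq)$ extinct children, so $\bE N_k=(dq)^k$, and therefore $\bE F_t=\sum_k (dq/t^2)^k=(1-dq/t^2)^{-1}$. This geometric series converges because $t^2\geq d$ and $q<1$.

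The plan has two steps. \emph{First, truncation.} Fix $K$ and replace $T_v$ by $T_v\wedge K$, meaning only depths at most $K$ are kept and the truncation is also capped in size. The truncated function $f(v,G)=\psi_{i,L}(v)^2\sum_{k\le K}t^{-2k}N_k(T_v)$ depends only on the radius-$(L+K+1)$ neighborhood with core membership marked. It is bounded by a polynomial in the neighborhood size, because $|\psi_{i,L}(v)|$ is at most a constant times $|B_L(v)|^2$ for fixed $L$. \Cref{prop:local-averages} therefore gives convergence of its core average to $\bE[\psi_{i,L}(o)^2]\sum_{k\le K}(dq/t^2)^k$, using the independence just described. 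The same proposition gives $\|\psi_{i,L}\|^2/|V(G^{(2)})|\to\bE\psi_{i,L}(o)^2$. Uniformity in $t=t_n$ holds because the dependence on $t$ is through the finitely many deterministic coefficients $t^{-2k}$, which are bounded on $[\sqrt d,\sqrt d+\epsilon_0]$.

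\emph{Second, the tail, which I expect to be the main obstacle.} We must show that $\frac{1}{|V(G^{(2)})|}\sum_v\psi_{i,L}(v)^2\sum_{k>K}t^{-2k}N_k(T_v)$ is small in probability, uniformly in $n$, as $K\to\infty$. Since $t^{-2k}\leq d^{-k}\le 1$, this tail is at most $\sum_v\psi_{i,L}(v)^2|T_v|\mathbf 1\{\mathrm{depth}(T_v)>K\}$. I would apply Cauchy--Schwarz to bound it by
\[
\Bigl(\sum_v\psi_{i,L}(v)^4\Bigr)^{1/2}\Bigl(\sum_v|T_v|^2\mathbf 1\{\mathrm{depth}(T_v)>K\}\Bigr)^{1/2}.
\]
For the first factor, $\psi^4$ is a local polynomially bounded function, so \Cref{prop:local-averages} makes its average converge to a finite limit. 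One needs the limit to be finite, which holds because Poisson-type offspring have all moments. For the second factor, the argument of \Cref{lem:pendant-tree-second-moment}, including the uniform exponential tail of pendant tree sizes, shows that its normalized version converges to $\bE[|T|^2\mathbf 1\{\mathrm{depth}>K\}]$, and this tends to $0$ as $K\to\infty$. An alternative, if the Cauchy--Schwarz route is awkward, is to truncate $\psi^2$ at a level $M$ as well and use the exponential tail directly.

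Combining the two steps and letting $K\to\infty$ yields \eqref{eq:extension-norm}. For the final assertion, \Cref{prop:finite-core} and its proof show that $\|\psi_{i,L}\|^2/|V(G^{(2)})|$ converges to $\bE\psi_{i,L}(o)^2$, which lies in $(0,\infty)$ because $Z_L(o)>0$ on $T^{(2)}$. The factor $(1-dq/t^2)^{-1}$ lies in $[1,(1-q)^{-1}]$. Both bounds follow with high probability.
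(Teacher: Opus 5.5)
Your proposal is correct and follows essentially the same route as the paper. Both use the same pendant-tree decomposition and the same depth-$K$ truncation, handled by \Cref{prop:local-averages} together with the independence from \Cref{lem:core-law}. Both also control the tail by Cauchy--Schwarz against $\sum_v\psi_{i,L}(v)^4$ and $\sum_v|T_v|^2$.

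The only variation is in the tail. The paper keeps $t^{-2k}\le d^{-k}$ to get a prefactor $d^{-K}$, and then uses \Cref{lem:pendant-tree-second-moment} exactly as stated. You discard that decay, so you need the slightly stronger fact that $\frac{1}{|V(G^{(2)})|}\sum_v|T_v|^2\mathbf 1\{\mathrm{depth}(T_v)>K\}$ converges to a limit that vanishes as $K\to\infty$. This does follow from the same exponential-tail argument.
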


\begin{proof}
Our extension is
\begin{align}\label{eq:extension-norm-sum}
    \|\widetilde\psi_{i,L}\|_2^2
    =
    \sum_{v\in V( G^{(2)})}
    \psi_{i,L}(v)^2
    \sum_{x\in T_v}t^{-2\dist(x,v)}.
\end{align}

Fix $K\geq0$ and first retain only the vertices $x\in T_v$ satisfying
$\dist(x,v)\leq K$. The resulting summand depends only on a
fixed-radius marked neighborhood of $v$. Therefore, 
\Cref{prop:local-averages} gives
\begin{align}\label{eq:truncated-extension-norm}
\begin{split}
    &\frac{1}{|V( G^{(2)})|}
    \sum_{v\in V( G^{(2)})}
    \psi_{i,L}(v)^2
    \sum_{\substack{x\in T_v\\ \dist(x,v)\leq K}}
        t^{-2\dist(x,v)}\\
    &\qquad
    -
    \bE\psi_{i,L}(o)^2
    \sum_{k=0}^K
        \left(\frac{dq}{t^2}\right)^k
    \overset \bP\longrightarrow0.
\end{split}
\end{align}
To see this,  by \Cref{lem:core-law}, in the limiting marked neighborhood
the pendant tree is independent of the labelled $2$-core and has
offspring distribution $\Pois(dq)$. Its expected number of vertices
at depth $k$ is therefore $(dq)^k$.

It remains to show that truncating at depth $K$ introduces a uniformly
small error. Since $t^2\geq d$, for every $v\in V( G^{(2)})$,
\[
    \sum_{\substack{x\in T_v\\ \dist(x,v)>K}}
        t^{-2\dist(x,v)}
    \leq
    d^{-K}|T_v|.
\]
Moreover,
\[
    \frac{|V( G^{(2)})|}{n}
    \overset{\bP}{\longrightarrow}
    \bP(\Pois(\eta)\geq2)>0.
\]

Consequently, by Cauchy--Schwarz,
\begin{align}\label{eq:extension-tail-bound}
\begin{split}
    &\frac{1}{|V( G^{(2)})|}
    \sum_{v\in V( G^{(2)})}
        \psi_{i,L}(v)^2
        \sum_{\substack{x\in T_v\\ \dist(x,v)>K}}
            t^{-2\dist(x,v)}\\
    &\qquad
    \leq
    d^{-K}
    \left(
        \frac{1}{|V( G^{(2)})|}
        \sum_{v\in V( G^{(2)})}\psi_{i,L}(v)^4
    \right)^{1/2} 
    \left(
        \frac{1}{|V( G^{(2)})|}
        \sum_{v\in V( G^{(2)})}|T_v|^2
    \right)^{1/2}.
\end{split}
\end{align}
Because $L$ is fixed, another application of
\Cref{prop:local-averages} gives
\[
    \frac{1}{|V( G^{(2)})|}
    \sum_{v\in V( G^{(2)})}\psi_{i,L}(v)^4
    \overset \bP\longrightarrow
    \bE\psi_{i,L}(o)^4<\infty.
\]
Together with \Cref{lem:pendant-tree-second-moment}, the two factors
on the right-hand side of \eqref{eq:extension-tail-bound} are
$O_n(1)$. Therefore, for every $\delta>0$,
\begin{align}\label{eq:extension-tail-vanishes}
    \lim_{K\to\infty}\limsup_{n\to\infty}
    \bP\left(
        \frac{1}{|V( G^{(2)})|}
        \sum_{v\in V( G^{(2)})}
        \psi_{i,L}(v)^2
        \sum_{\substack{x\in T_v\\ \dist(x,v)>K}}
            t^{-2\dist(x,v)}
        >\delta
    \right)
    =0.
\end{align}

The corresponding tail of the limiting geometric series is also
uniformly small. Indeed, since $dq/t^2\leq q<1$,
\[
    0
    \leq
    \frac{\bE\psi_{i,L}(o)^2}{1-dq/t^2}
    -
    \bE\psi_{i,L}(o)^2
    \sum_{k=0}^K
        \left(\frac{dq}{t^2}\right)^k
    \leq
    \frac{\bE\psi_{i,L}(o)^2}{1-q}q^{K+1}.
\]
Combining this bound with
\eqref{eq:truncated-extension-norm} and
\eqref{eq:extension-tail-vanishes}, and then letting $K\to\infty$,
proves \eqref{eq:extension-norm}.

Finally,
\[
    1
    \leq
    \frac{1}{1-dq/t^2}
    \leq
    \frac{1}{1-q},
\]
uniformly for $t^2\geq d$. Since
$0<\bE\psi_{i,L}(o)^2<\infty$, the final assertion follows.
\end{proof}

We are now ready to prove the main theorem.
\begin{proof}[Proof of \Cref{thm:main}]
The upper bounds were proved in \Cref{cor:thm-main-ub}. For the lower bounds, fix any $0\leq  \epsilon\leq  \epsilon_0$, and let $t=\sqrt{d}+\epsilon$.
With probability $1-\oo_n(1)$, 
\Cref{prop:finite-core} gives an
$r_+$-dimensional subspace, spanned by $(\psi_{i,L})_{i\in[r_+]}$, on which
the Rayleigh quotient of $H(t, G^{(2)})$ is at most a negative constant. We now extend this basis to $(\widetilde \psi_{i,L})_{i\in[r_+]}$ on $G$ using
\Cref{def:harmonic-extension}.  By \Cref{lem:extension-norm} and the fact that $r_+$ is
fixed, for all $\widetilde \psi\deq \sum_{i=1}^{r_+}a_i\widetilde\psi_{i,L}$, we have
\[
   \left\|\widetilde \psi\right\|_2^2= \left\|\sum_{i=1}^{r_+}a_i\widetilde\psi_{i,L}\right\|_2^2
    \leq
    C|V( G^{(2)})|\sum_{i=1}^{r_+}a_i^2.
\]
Note that $\widetilde \psi$ is the harmonic extension of $\psi\deq \sum_{i=1}^{r_+}a_i \psi_{i,L}$.
By
\Cref{lem:harmonic-extension} and Proposition \ref{prop:finite-core}, we have

\be\label{eq:lasteq}
  \widetilde \psi^{\top} H(t,G) \widetilde \psi= \left(\sum_{i=1}^{r_+}a_i \psi_{i,L}\right)^{\top} H(t,G^{(2)}) \left(\sum_{i=1}^{r_+}a_i \psi_{i,L}\right)\leq   -c|V( G^{(2)})|\sum_{i=1}^{r_+}a_i^2.
\ee
The harmonic extension is injective, so the span of $(\widetilde \psi_{i,L})_{i\in[r_+]}$ still has dimension $r_+$.
Thus $H(t, G)$ has at least $r_+$ eigenvalues below a fixed negative
constant $-c_0$. This proves
\[
    N_{c_0}(\sqrt d+\epsilon)\geq r_+
\]
for all $\sqrt{d}\leq t\leq \sqrt{d}+\epsilon_0$. Up to replacing $\epsilon_0$ by $\min\{c_0,\epsilon_0\}$, this gives
\[
N_\epsilon(\sqrt{d}),\, N(\sqrt{d}+\epsilon)\geq r_+\qquad \text{for all $0\leq\epsilon\leq\epsilon_0$.}
\]
Together with \Cref{cor:thm-main-ub}, this finishes the proof.

\end{proof}
\begin{remark}
    
For a negative informative eigenvalue, make the following changes. Take $t<0$, $\mu_0<0$, and
$\rho_i<0$. In each rooted tree calculation, replace
$X_{u,i}$ by $(-1)^{\dist(o,u)}X_{u,i}$. If
$s=-t$, $\nu_0=-\mu_0$, and $\zeta_i=-\rho_i$, then the test vector becomes
exactly \eqref{eq:test-vector} with the positive parameters
$s,\nu_0,\zeta_i$. For a neighbor of the root the transformed test vector
changes sign, so the local quadratic contribution for $H(t)$ from  \eqref{eq:lasteq} is the
same as the positive-parameter contribution for $H(s)$. The harmonic
extension identity is unchanged, and its norm depends only on $t^2$.
Thus the same argument gives
\[
   N_{\epsilon}(-\sqrt d),\, N(-\sqrt d-\epsilon)\leq r_-.
\]
\end{remark}

\bibliographystyle{amsplain}
\bibliography{ref}

\end{document}